\newcommand*{\mailto}[1]{\href{mailto:#1}{\nolinkurl{#1}}}
\newcommand{\arxiv}[1]{\href{http://arxiv.org/abs/#1}{arXiv:#1}}
\newcommand{\R}{{\mathbb R}}
\newcommand{\N}{{\mathbb N}}
\newcommand{\bbN}{{\mathbb{N}}}
\newcommand{\bbR}{{\mathbb{R}}}
\newcommand{\bbS}{{\mathbb{S}}}
\newcommand{\beq}{\begin{equation}}
\newcommand{\enq}{\end{equation}}
\DeclareMathOperator{\dom}{dom}
\renewcommand{\ln}{\text{\rm ln}}
\newcommand{\no}{\notag}
\newcommand{\lb}{\label}
\newcommand{\f}{\frac}
\newcommand{\ol}{\overline}
\newcommand{\bs}{\backslash}
\newcommand{\wti}{\widetilde}
\newcommand{\hatt}{\widehat}
\newcommand{\dott}{\,\cdot\,}
\newcommand{\bi}{\bibitem}
\let\geq\geqslant
\let\leq\leqslant
\def\theequation{\@arabic\c@equation}
\numberwithin{equation}{section}
\newtheorem{theorem}{Theorem}[section]
\newtheorem{lemma}[theorem]{Lemma}
\newtheorem{corollary}[theorem]{Corollary}
\theoremstyle{remark}
\newtheorem{remark}[theorem]{Remark}
\begin{document}
	
\title[Optimal Power-Weighted Birman--Hardy--Rellich-type Inequalities]{Optimal Power-Weighted Birman--Hardy--Rellich-type  Inequalities on Finite Intervals and Annuli}
	
\author[F.\ Gesztesy]{Fritz Gesztesy}
\address{Department of Mathematics, Baylor University, Sid Richardson Bldg., 1410 S.\,4th Street, Waco, TX 76706, USA}
\email{\mailto{Fritz\_Gesztesy@baylor.edu}}
\urladdr{\url{https://math.artsandsciences.baylor.edu/person/fritz-gesztesy-phd}}

\author[M.\ M.\ H.\ Pang]{Michael M.\ H.\ Pang}
\address{Department of Mathematics,
University of Missouri, Columbia, MO 65211, USA}
\email{\mailto{pangm@missouri.edu}}
\urladdr{\url{https://math.missouri.edu/people/emeritus/pang}}

\dedicatory{Dedicated, with great admiration, to the memory of Vadim Tkachenko (1937--2023)}

\date{\today}
\thanks{To appear in {\it Bulletin London Math. Soc.}}
\@namedef{subjclassname@2020}{\textup{2020} Mathematics Subject Classification}
\subjclass[2020]{Primary: 34A40, 47A63; Secondary: 34B24, 47E05.}
\keywords{Power-weighted Hardy inequality, optimal constants, finite intervals.}

\begin{abstract}
We derive an optimal power-weighted Hardy-type inequality in integral form on finite intervals and subsequently prove the analogous inequality in differential form. We note that the optimal constant of the latter inequality differs from the former. Moreover, by iterating these inequalities we derive the sequence of power-weighted Birman--Hardy--Rellich-type inequalities in integral form on finite intervals  and then also prove the analogous sequence of inequalities in differential form. 

We use the one-dimensional Hardy-type result in differential form to derive an optimal multi-dimensional version of the power-weighted Hardy inequality in differential form on annuli (i.e., spherical shell domains), and once more employ an iteration procedure to derive the Birman--Hardy--Rellich-type sequence of power-weighted higher-order Hardy-type inequalities for annuli. 

In the limit as the annulus approaches $\bbR^n\backslash\{0\}$, we recover well-known prior results on Rellich-type inequalities on $\bbR^n\backslash\{0\}$.
\end{abstract}

\maketitle


{\scriptsize{\tableofcontents}}

\section{Introduction} \lb{s1}

We dedicate this paper to the memory of Vadim Tkachenko (1937--2023), a powerful mathematician and a wonderful human being, in the hope this paper would have put a smile on his face.

In this paper we return to optimal power-weighted Hardy inequalities, but with a twist: The interval in question is assumed to be finite and avoiding the origin. More precisely, we prove the following two optimal results ${\bf (I)}$ and ${\bf (II)}$ which appear to be new. 

Our power-weighted Hardy inequality in integral form on finite intervals in Section \ref{s2} reads as 
follows: \\[1mm] 
${\bf (I)}$ Let $\alpha \in \bbR \backslash \{1\}$ and $0 < a < b < \infty$. In addition, suppose that $\beta$ is the unique solution of 
\begin{equation}
[2/(1-\alpha)] \beta + \tan(\beta \ln(b/a)) = 0 \, \text{ in } \, 
(0,\pi/\ln(b/a)) \backslash \{\pi/[2\ln(b/a)]\}.     \lb{1.1} 
\end{equation}
Then
\begin{align} 
\begin{split} 
\int_a^b dx \, x^{\alpha} |h(x)|^2 \geq \big\{4^{-1} (1-\alpha)^2 + \beta^2\big\} \int_a^b dx \, x^{\alpha - 2} \bigg(\int_a^x dt \, |h(t)|\bigg)^2,&    \\
h \in L^2((a,b); dx).&    \lb{1.2} 
\end{split}
\end{align}
Moreover, equality holds in \eqref{1.2} for 
\begin{equation}
h_{\alpha,a,b} (x) = (x/a)^{-(\alpha+1)/2} \{[2/(1-\alpha)] \beta \, \cos(\beta \, \ln(x/a)) + \sin(\beta \, \ln(x/a))\},  \quad x \in [a,b],     \lb{1.3} 
\end{equation}
observing the fact that $h_{\alpha,a,b} \in L^2((a,b);dx)$, 
and hence the constant $4^{-1} (1-\alpha)^2 + \beta^2$ in \eqref{1.2} is optimal. 

We emphasize that the case $\alpha = 0$ in inequality \eqref{1.1}--\eqref{1.3} was recently derived by Dimitrov, Gadjev, and Ismail \cite{DGI24} (see also \cite{Ga24}), and their result inspired the writing of this paper.

Before describing the rest of our results, we recall that the Sobolev space  $W^{m, 2}_0 (\Omega)$, $m \in \bbN$, $\Omega \subseteq \bbR^n$ open, $n \in \bbN$, is defined as the completion of $C^{\infty}_0(\Omega)$ under the norm $\| \cdot \|_{m, 2, \Omega}$ given by
\begin{equation}
\|f\|_{m, 2,\Omega}^2 =  \sum _{0 \leq |\alpha| \leq m} \int_{\Omega} d^n x \, \big|\big(D^{\alpha} f\big)(x)\big|^2, \quad f \in C_0^{\infty}(\Omega),  
\end{equation}
where $\alpha = (\alpha_1,\dots,\alpha_n) \in \bbN_0^n$ denotes a multi index, $|\alpha|=\alpha_1+\cdots+\alpha_n$, and $D^{\alpha} = \partial^{|\alpha|}/\partial_{x_1}^{\alpha_1} \dots \partial_{x_n}^{\alpha_n} $ denotes the derivative of $f$ of order $\alpha$. Explicitly, $W^{m, 2}_0 (\Omega) = \ol{C_0^{\infty}(\Omega)^{\| \, \cdot \, \|_{m, 2,\Omega}}}$. 

Our power-weighted Hardy inequality in differential form on finite intervals in Section \ref{s3} reads as follows: 
\\[1mm] 
${\bf (II)}$ Let $\alpha \in \bbR$ and $0 < a < b < \infty$. Then
\begin{align}
\begin{split} 
\int_a^b dx \, x^{\alpha} |f'(x)|^2 \geq \big\{4^{-1} (1-\alpha)^2 + [\pi/\ln(b/a)]^2\big\} \int_a^b dx \, x^{\alpha-2} |f(x)|^2,&     \\
f \in W_0^{1,2}((a,b)).&    \lb{1.4} 
\end{split} 
\end{align} 
Moreover, equality holds in \eqref{1.4} for 
\begin{equation}
f_{\alpha,a,b}(x) = x^{(1-\alpha)/2} \sin ([\pi/\ln(b/a)] \ln(x/a)), \quad x \in [a,b],    \lb{1.5}
\end{equation}
observing the fact that $f_{\alpha,a,b} \in W_0^{1,2}((a,b))$, and hence the constant $4^{-1} (1-\alpha)^2 + [\pi/\ln(b/a)]^2$ in \eqref{1.4} is optimal. 

We note the interesting fact that the optimal constants in the inequalities \eqref{1.2} and \eqref{1.4} differ and refer to Remark \ref{r3.2}\,$(ii)$ for more details.

Finally, our basic multidimensional result in Section \ref{s4} is of the following form: \\[1mm]
${\bf (III)}$  Let $n \in \bbN$, $n \geq 2$, $\alpha \in \bbR$, $0 < r_1 < r_2 < \infty$, and introduce the open multi-dimensional annulus, or, spherical shell, 
\begin{equation}
A_n(r_1,r_2) = \{x \in \bbR^n \, | \, r_1 < |x| < r_2\}.     \lb{1.6} 
\end{equation}
Then one has the optimal Hardy inequality for annuli
\begin{align}  
\begin{split} 
& \int_{A_n(r_1,r_2)} d^n x \, |x|^{\alpha} |(\nabla f)(x)|^2      \\
& \quad \geq \big\{4^{-1} (2-\alpha - n)^2 + [\pi/\ln(r_2/r_1)]^2\big\}
\int_{A_n(r_1,r_2)} d^n x \, |x|^{\alpha-2} |f(x)|^2,      \\
& \hspace*{7.25cm} f \in W_0^{1,2}(A_n(r_1,r_2)).     \lb{1.7} 
\end{split} 
\end{align}
Moreover, equality holds in \eqref{1.7} for 
\begin{equation}
f_{\alpha,r_1,r_2,n}(r) = r^{(2-\alpha-n)/2} \sin([\pi/\ln(r_2/r_1)] \ln(r/r_1)), \quad r \in (r_1,r_2), 
\lb{1.8} 
\end{equation}
observing the fact that $f_{\alpha,r_1r_2,n} \in W_0^{1,2}(A_n(r_1,r_2))$, and hence the constant $4^{-1} (2-\alpha - n)^2 + [\pi/\ln(r_2/r_1)]^2$ in \eqref{1.7} is optimal. 

In all cases ${\bf (I)}$--${\bf (III)}$ we employ an iteration method to also derive power-weighted higher-order Hardy-type inequalities (also known as the power-weighted Birman--Hardy--Rellich-type sequence of inequalities). More precisely, in the context of inequalities in integral form on finite intervals we derive the following sequence of power-weighted Birman--Hardy--Rellich-type inequalities: Let $0 < a < b < \infty$, $h \in L^2((a,b);dx)$, $m \in \bbN$, and $\alpha \in \bbR \backslash \{2k-1\}_{1 \leq k \leq m}$. Then 
\begin{align}
\begin{split}
& \int_a^b dx \, x^{\alpha} |h(x)|^2 \geq \prod_{j=1}^m \big\{4^{-1}(2j-1-\alpha)^2 + \beta_{\alpha,j}^2\big\}    \\
& \quad \times \int_a^b dx \, x^{\alpha - 2m} \Bigg(\int_a^x dt_m \int_a^{t_{m}} dt_{m-1} \cdots \int_a^{t_2} dt_1 \, |h(t_1)|\Bigg)^2.    \lb{1.9} 
\end{split} 
\end{align}

The corresponding sequence of inequalities in differential form are of the following type:
Let $0 < a < b < \infty$, $\alpha \in \bbR$, and $k, m \in \bbN$, with $1 \leq k \leq m$. Then 
\begin{align}
\begin{split} 
\int_a^b dx \, x^{\alpha} |f^{(m)}(x)|^2 & \geq \prod_{j=1}^k \big\{4^{-1} (2j-1-\alpha)^2 + [\pi/\ln(b/a)]^2\big\}    \\
& \quad \times \int_a^b dx \, x^{\alpha-2k} |f^{(m-k)}(x)|^2, \quad f \in W_0^{m,2}((a,b)).    \lb{1.10} 
\end{split} 
\end{align} 

Finally, the sequence of Birman--Hardy--Rellich-type inequalities for annuli reads as follows: Let $m,n \in \bbN$, $n \geq 2$, and $\alpha \in \bbR$. Then 
\begin{align}
\begin{split} 
& \int_{A_n(r_1,r_2)} d^n x \, |x|^{\alpha} \big|\big((-\Delta)^m f\big)(x)\big|^2 \geq 
\Bigg(\prod_{j=1}^m D_{n,r_1,r_2}(\alpha - 4(j-1))\Bigg)     \\
& \quad \times \int_{A_n(r_1,r_2)} d^n x \, |x|^{\alpha-4m} |f(x)|^2, 
\quad f \in C_0^{\infty}(A_n(r_1,r_2)),
\end{split} 
\end{align}
where
\begin{align}
& D_{n,r_1,r_2}(\gamma) = \wti \beta_{n,\gamma} + 2^{-1} \big[(\gamma-2)^2 + (n-2)^2\big] [\pi/\ln(r_2/r_1)]^2 
+ [\pi/\ln(r_2/r_1)]^4,    \no \\
& \wti \beta_{n,\gamma} = \min\bigg\{\Big[\big\{\big[(\gamma-2)^2 ) - (n-2)^2\big]\big/4\big\} + j(j+n-2)\Big]^2 \, \bigg| \, j \in \bbN_0 \bigg\}; \quad \gamma \in \bbR,  
\end{align}
and 
\begin{align}
& \int_{A_n(r_1,r_2)} d^n x \, |x|^{\alpha} \big|\big(\nabla (-\Delta)^m f\big)(x)\big|^2     \no \\
& \quad \geq 
\big\{4^{-1} (\alpha+n-2)^2 + [\pi/\ln(r_2/r_1)]^2\big\} 
\Bigg(\prod_{j=1}^m D_{n,r_1,r_2}(\alpha - 2 - 4(j-1))\Bigg)    \lb {1.13} \\
& \qquad \times \int_{A_n(r_1,r_2)} d^n x \, |x|^{\alpha-2-4m} |f(x)|^2, 
\quad f \in C_0^{\infty}(A_n(r_1,r_2)).     \no 
\end{align}
For well-known special cases in the limits $r_1\downarrow 0$ and $r_2\uparrow \infty$ where the annulus $A_n(r_1,r_2)$ approaches $\bbR^n\backslash\{0\}$, we refer to Remarks \ref{r4.6} and \ref{r4.10} and Corollary \ref{c4.9}.

We cannot possibly do the enormously voluminous literature on Hardy-type inequalities any justice, and much less so in this short manuscript. Hence we just refer to the following monographs 
\cite[Sect.~1.2]{BEL15}, \cite[Ch.~1]{GM13}, \cite[p.~239--243]{HLP91}, \cite[Chs.~3--5]{KMP07}, \cite[Chs.~1--2]{KPS17}, \cite[Sects.~1.3]{Ma11}, \cite[Ch.~1]{OK90}, and to the recent papers \cite{GLMP22}, \cite{GMP22a}, \cite{GMP22}, and \cite{GPS24} for extensive bibliographies on this topic.

Finally, we briefly comment on some of the basic notation used throughout this paper.  If $T$ is a linear operator mapping (a subspace of) a Hilbert space into another, then $\dom(T)$ denotes the domain of $T$. The spectrum of a closed linear operator in a Hilbert space will be denoted by $\sigma(\cdot)$. We also use the shorthand notation $\bbN_0 = \bbN \cup \{0\}$.

\section{Optimal Power-Weighted Hardy Inequalities on Finite Intervals in Integral Form}  \lb{s2}

In this section we prove an optimal power-weighted Hardy inequality on a finite interval in integral form and subsequently derive its iterated versions, that is, finite interval higher-order power-weighted Hardy inequalities in integral form.

We start with some preliminary results.

\begin{lemma} \lb{l2.1}
Suppose that $\alpha \in \bbR$, $b \in (1, \infty)$, $L \in \big(4^{-1}(1-\alpha)^2, \infty\big)$, and 
$C = 2^{-1} \big[4L - (1-\alpha)^2\big]^{1/2}$. Then the following items $(i)$--$(iii)$ hold:\\[1mm]
$(i)$ The differential equation
\begin{equation}
- \big(x^{2-\alpha} \big(x^{\alpha} y\big)'\big)'(x) = L y(x), \quad x \in [1,b],   \lb{2.1}
\end{equation}
has the linearly independent solutions
\begin{equation}
y_1(x) = x^{- (\alpha+1)/2} \cos(C \, \ln(x)), \quad y_2(x) = x^{- (\alpha+1)/2} \sin(C \, \ln(x)), \quad x \in (1,b].      \lb{2.2} 
\end{equation}
$(ii)$ Assume in addition that 
\begin{equation} 
\alpha \in \bbR \backslash \{1\}      \lb{2.3}
\end{equation} 
and that $C_0$ is the unique solution of the equation 
\begin{equation}
[2/(1-\alpha)] C + \tan(C \, \ln(b)) =0 \, \text{ in } \, 
(0,\pi/\ln(b)) \backslash \{\pi/[2 \, \ln(b)]\}.    \lb{2.4}
\end{equation}
Then
\begin{equation}
y_0(x) = x^{-(\alpha+1)/2} \{[2/(1-\alpha)] C_0 \cos(C_0 \, \ln(x)) + \sin(C_0 \, \ln(x))\}, \quad x \in [1,b],      \lb{2.5} 
\end{equation}
is a solution of the regular boundary value problem
\begin{align}
\begin{split} 
& - \big(x^{2-\alpha} \big(x^{\alpha} y\big)'\big)'(x) = L_0 y(x), \quad 
L_0 = 4^{-1} (1-\alpha)^2 + C_0^2, \; x \in [1,b],   \\
& \, y'(1) + \alpha y(1) =0, \quad y(b) = 0.     \lb{2.6} 
\end{split} 
\end{align} 
In addition,
\begin{equation}
y_0(x) \neq 0, \quad x \in [1,b).     \lb{2.7} 
\end{equation}
$(iii)$ Assuming once more that in addition \eqref{2.3} and \eqref{2.4} hold, then 
\begin{equation}
\int_x^b ds \, s^{\alpha-2} \int_1^s dt \, y_0(t) = L_0^{-1} x^{\alpha} y_0(x), \quad x \in [1,b].  
\lb{2.8} 
\end{equation}
\end{lemma}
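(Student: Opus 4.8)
The plan is to dispose of the three items in sequence, the common device being the substitution $u = x^{\alpha} y$, which turns the (regular, since $x \ge 1 > 0$) expression in \eqref{2.1} into an Euler (equidimensional) equation.

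\emph{Item $(i)$.} Putting $u = x^{\alpha} y$, a short computation shows that \eqref{2.1} is equivalent to $x^2 u'' + (2 - \alpha) x u' + L u = 0$, whose indicial polynomial $r^2 + (1 - \alpha) r + L$ has discriminant $(1-\alpha)^2 - 4 L < 0$ by hypothesis, hence the conjugate non-real roots $-(1-\alpha)/2 \pm i C$ with $C = 2^{-1}[4 L - (1-\alpha)^2]^{1/2} > 0$. Thus $u$ ranges over the real linear combinations of $x^{-(1-\alpha)/2} \cos(C \ln x)$ and $x^{-(1-\alpha)/2} \sin(C \ln x)$, and multiplying by $x^{-\alpha}$ produces exactly \eqref{2.2}; their linear independence I would certify via the Wronskian $W(y_1, y_2)(x) = C \, x^{-(\alpha + 2)} \neq 0$. (Alternatively one may simply insert \eqref{2.2} into \eqref{2.1} and verify it directly.)

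\emph{Item $(ii)$.} Since $L_0 = 4^{-1}(1-\alpha)^2 + C_0^2$, the constant associated to $L = L_0$ in $(i)$ equals $C_0$, so $y_0$ in \eqref{2.5}, being the corresponding linear combination of the solutions \eqref{2.2} with $C = C_0$, solves the differential equation in \eqref{2.6}. For the boundary data I would work with $u_0 := x^{\alpha} y_0$, using the identity $(x^{\alpha} y_0)'(x) = x^{\alpha - 1}\big(x y_0'(x) + \alpha y_0(x)\big)$, which recasts the left endpoint condition $y_0'(1) + \alpha y_0(1) = 0$ as $u_0'(1) = 0$. Differentiating $u_0(x) = x^{(\alpha - 1)/2}\{[2/(1-\alpha)] C_0 \cos(C_0 \ln x) + \sin(C_0 \ln x)\}$ and using $4^{-1}(\alpha - 1)^2 + C_0^2 = L_0$ together with \eqref{2.3}, the $\cos$ contribution cancels and only the single term $u_0'(x) = [2 L_0 / (\alpha - 1)] x^{(\alpha - 3)/2} \sin(C_0 \ln x)$ survives; in particular $u_0'(1) = 0$, and a further differentiation re-derives the differential equation for $y_0$. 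The right endpoint condition $y_0(b) = 0$ follows because $C_0$ satisfies \eqref{2.4}: there $\tan(C_0 \ln b)$ is meaningful, so $\cos(C_0 \ln b) \neq 0$, and multiplying \eqref{2.4} by $\cos(C_0 \ln b)$ yields $[2/(1-\alpha)] C_0 \cos(C_0 \ln b) + \sin(C_0 \ln b) = 0$, i.e. $y_0(b) = 0$. For \eqref{2.7}, write $y_0(x) = x^{-(\alpha + 1)/2} g(C_0 \ln x)$ with $g(\varphi) := [2/(1-\alpha)] C_0 \cos \varphi + \sin \varphi$; then $g(0) \neq 0$, $g(\pi/2) = 1 \neq 0$, and on $(0, \pi) \setminus \{\pi/2\}$ the equation $g(\varphi) = 0$ reduces to $\tan \varphi = -[2/(1-\alpha)] C_0$, whose right-hand side is a nonzero constant, so it has exactly one solution in $(0, \pi)$; by \eqref{2.4} that solution is $C_0 \ln b$. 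Hence $g$ has no zero on $[0, C_0 \ln b)$, and since $x \in [1, b)$ forces $C_0 \ln x \in [0, C_0 \ln b)$ (recall $C_0 \ln b < \pi$), \eqref{2.7} follows.

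\emph{Item $(iii)$.} Set $H(x) := - L_0^{-1} x^{2 - \alpha} u_0'(x)$ with $u_0 = x^{\alpha} y_0$. By the differential equation in \eqref{2.6}, $H'(x) = - L_0^{-1} \big(x^{2-\alpha}(x^{\alpha} y_0)'\big)'(x) = y_0(x)$, while $H(1) = - L_0^{-1} u_0'(1) = 0$ by $(ii)$, so $H(x) = \int_1^x dt \, y_0(t)$ for $x \in [1, b]$. Substituting this into the left-hand side of \eqref{2.8} and cancelling the weights through $s^{\alpha - 2}\big(- L_0^{-1} s^{2-\alpha} u_0'(s)\big) = - L_0^{-1} u_0'(s)$ gives $\int_x^b ds \, s^{\alpha - 2} \int_1^s dt \, y_0(t) = - L_0^{-1}\int_x^b ds \, u_0'(s) = L_0^{-1}\big(u_0(x) - u_0(b)\big) = L_0^{-1} x^{\alpha} y_0(x)$, the last equality because $u_0(b) = b^{\alpha} y_0(b) = 0$ by the second boundary condition in \eqref{2.6}. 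The step I expect to demand the most care is the zero-counting in $(ii)$: pinning down $C_0 \ln b$ as the first positive zero of $g$, which is precisely what the restriction of \eqref{2.4} to the branch $(0, \pi/\ln b) \setminus \{\pi/[2 \ln b]\}$ is engineered to guarantee (and which tacitly uses the existence and uniqueness of $C_0$ granted by the hypothesis); items $(i)$ and $(iii)$ are then routine.
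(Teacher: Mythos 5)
Your proposal is correct and follows essentially the same route as the paper: items $(i)$, $(iii)$ and the ODE/boundary-condition parts of $(ii)$ are settled by direct computation (the paper explicitly dismisses them as such), and the only point needing a genuine argument is the non-vanishing property \eqref{2.7}. There you replace the paper's two-case monotonicity argument for $\tan$ (split according to the sign of $\tan(C_0\ln(b))$, i.e., of $\alpha-1$) by the observation that $g(\varphi)=[2/(1-\alpha)]C_0\cos(\varphi)+\sin(\varphi)$ has exactly one zero in $(0,\pi)$, which \eqref{2.4} identifies as $C_0\ln(b)$; the two arguments are equivalent in substance and yours avoids the case split.
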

\begin{proof}
Since items $(i)$, $(iii)$ and most of item $(ii)$ are based on direct computations, it suffices to focus on the non-vanishing property \eqref{2.7}. We start by noting that $0 \leq C_0 \, \ln(x) < \pi$ for $x \in [1,b)$ and that $y_0(1) = C_0 \, 2/(1-\alpha) \neq 0$. Moreover, if for some $x_0 \in (1,b)$ one has $\cos(C_0 \, \ln(x_0)) = 0$, then $C_0 \, \ln(x_0) = \pi/2$ and hence 
$y_0(x_0) = x_0^{-(\alpha + 1)/2} \neq 0$. Thus it remains to consider $x \in (1,b)$ with $\cos(C_0 \, \ln(x)) \neq 0$. Suppose first that $\tan(C_0 \ln(b)) > 0$ (i.e., $\alpha \in (1,\infty)$). Then 
\begin{equation}
\tan(C_0 \, \ln(x)) = \tan(C_0 [\ln(x)/\ln(b)] \ln(b)) < \tan(C_0 \, \ln(b)) = [2/(\alpha - 1)] C_0,   \lb{2.9}
\end{equation}
and hence,
\begin{equation}
y_0(x) = x^{- (\alpha + 1)/2} \cos(C_0 \, \ln(x)) \{[2/(1-\alpha)] C_0 + \tan(C_0 \, \ln(x))\} \neq 0, \quad x \in [1,b).  
\lb{2.10}
\end{equation}
Next, suppose that $\tan(C_0 \, \ln(b)) < 0$ (i.e., $\alpha \in (-\infty,1)$). If $\pi/2 < C_0 \, \ln(x) < \pi$, then once again \eqref{2.9} and \eqref{2.10} hold. On the other hand, if $0 < C_0 \, \ln(x) < \pi/2$, then 
\begin{equation}
\tan(C_0 \, \ln(x)) > 0 > \tan(C_0 \, \ln(b)) = [2/(\alpha-1)] C_0, 
\end{equation}
and hence once more \eqref{2.10} holds.
\end{proof}

\begin{lemma} \lb{l2.2}
Suppose that $\alpha \in \bbR \backslash \{1\}$, $b \in (1, \infty)$, $L_0 = 4^{-1} (1-\alpha)^2 + C_0^2$, where $C_0$ is the unique solution of \eqref{2.4}.
Then
\begin{equation}
\int_1^b dx \, x^{\alpha} |f(x)|^2 \geq L_0 \int_1^b dx \, x^{\alpha-2} \bigg(\int_1^x dt \, |f(t)|\bigg)^2, \quad 
f \in L^2((1,b); dx).     \lb{2.12}
\end{equation}
Moreover, equality holds in \eqref{2.12} for 
\begin{equation}
f_{\alpha,b} (x) = x^{-(\alpha+1)/2} \{[2/(1-\alpha)] C_0 \, \cos(C_0 \, \ln(x)) + \sin(C_0 \, \ln(x))\}, 
\quad x \in [1,b],     \lb{2.13} 
\end{equation}
observing the fact that $f_{\alpha,b} \in L^2((1,b);dx)$, 
and hence the constant $L_0$ in \eqref{2.12} is optimal.  
\end{lemma}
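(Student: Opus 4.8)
The plan is to deduce \eqref{2.11} from a single Cauchy--Schwarz estimate in which the weight is chosen to be (the modulus of) the solution $y_0$ from Lemma \ref{l2.1}, so that the identity \eqref{2.8} produces exactly the claimed constant $L_0$. Since both sides of \eqref{2.11} depend on $f$ only through $|f|$, I would reduce at once to the case $f \geq 0$, noting also $f \in L^2((1,b);dx) \subset L^1((1,b);dx)$ because $(1,b)$ is bounded. Set $w := |y_0|$, which by \eqref{2.7} is continuous and strictly positive on $[1,b)$ (with $w(b) = 0$). For each fixed $x \in (1,b)$, Cauchy--Schwarz gives
\[
\bigg(\int_1^x dt\, f(t)\bigg)^2 \leq \bigg(\int_1^x dt\, \frac{f(t)^2}{w(t)}\bigg) \bigg(\int_1^x ds\, w(s)\bigg).
\]
Multiplying by $x^{\alpha-2}$, integrating in $x$ over $(1,b)$, and applying Tonelli's theorem (the integrand being nonnegative) to interchange the order of integration, one obtains
\[
\int_1^b dx\, x^{\alpha-2}\bigg(\int_1^x dt\, f(t)\bigg)^2 \leq \int_1^b dt\, \frac{f(t)^2}{w(t)} \bigg(\int_t^b dx\, x^{\alpha-2}\int_1^x ds\, w(s)\bigg).
\]

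The key step is then to evaluate the bracketed inner double integral by means of \eqref{2.8}, which holds verbatim for $w = \pm y_0$ (just multiply both sides of \eqref{2.8} by $\pm 1$): it equals $L_0^{-1}\, t^{\alpha}\, w(t)$. The factor $w(t)^{-1}$ therefore cancels against $w(t)$ — so the singularity of $w^{-1}$ at the right endpoint is harmless — and one is left with $\int_1^b dx\, x^{\alpha-2}\big(\int_1^x dt\, f(t)\big)^2 \leq L_0^{-1}\int_1^b dt\, t^{\alpha} f(t)^2$, which is precisely \eqref{2.11}; the right-hand side is finite since $t^{\alpha}$ is bounded on the compact interval $[1,b]$.

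For the equality assertion I would take $f = f_{\alpha,b}$, so that $|f| = w$ and the Cauchy--Schwarz step above becomes an equality on every subinterval $[1,x]$ (the integrand ratio $f/w^{1/2}$ being a constant multiple of $w^{1/2}$); hence the whole chain collapses to equality in \eqref{2.11}. Alternatively, inserting $f_{\alpha,b}$ directly and integrating \eqref{2.8} by parts — using $f_{\alpha,b}(b) = 0$ and that $x \mapsto \int_1^x dt\, |f_{\alpha,b}(t)|$ vanishes at $x = 1$ — yields the same conclusion. Since $f_{\alpha,b}$ is continuous on $[1,b]$ it belongs to $L^2((1,b);dx)$ and is not identically zero, so no constant exceeding $L_0$ can replace $L_0$ in \eqref{2.11}; thus $L_0$ is optimal.

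The only genuine obstacle is conceptual rather than computational: recognizing that the correct weight in the Cauchy--Schwarz estimate is $w = |y_0|$. This choice is legitimate precisely because $y_0$ does not change sign on $[1,b)$ (the content of \eqref{2.7}), and it succeeds precisely because \eqref{2.8} turns $\int_t^b dx\, x^{\alpha-2}\int_1^x ds\, w(s)$ into a scalar multiple of $t^{\alpha} w(t)$, so that the weight and the constant $L_0$ match. Everything else — the reduction to $f \geq 0$, the use of Tonelli's theorem, and the cancellation at $x = b$ — is routine.
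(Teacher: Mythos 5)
Your proposal is correct and follows essentially the same route as the paper: the paper's proof applies Cauchy's inequality with the auxiliary function $g = |y_0|^{1/2}$, which is exactly your weighted Cauchy--Schwarz step with weight $w = |y_0|$, and then invokes \eqref{2.8} to identify the resulting kernel as $L_0^{-1}t^{\alpha}$, with equality checked for $f = y_0$ just as you do. Your added remarks (reduction to $f \geq 0$, Tonelli, the sign observation justified by \eqref{2.7}) only make explicit what the paper leaves implicit.
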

\begin{proof}
Let $g \in L^2((1,b); dx)$ be real-valued and suppose that $g \neq 0$ on $(1,b)$. Then for all 
$f \in L^2((1,b); dx)$, Cauchy's inequality implies 
\begin{equation}
\bigg(\int_1^x dt \, |f(t)|\bigg)^2 \leq \int_1^x ds \, |g(s)|^2 \int_1^x dt \, |f(t)|^2\big/|g(t)|^2, 
\quad x \in (1,b).     \lb{2.14} 
\end{equation}
Thus,
\begin{align}
& \int_1^b dx \, x^{\alpha-2} \bigg(\int_1^x dt \, |f(t)|\bigg)^2 \leq \int_1^b dx \, x^{\alpha-2} 
\int_1^x ds \, g(s)^2 \int_1^x dt \, |f(t)|^2\big/g(t)^2      \no \\
& \quad = \int_1^b dt \, |f(t)|^2 g(t)^{-2} \int_t^b dx \, x^{\alpha-2} \int_1^x ds \, g(s)^2,   \lb{2.15} 
\end{align}
upon interchanging the order of integration. Introducing
\begin{equation}
G_{\alpha}(g;t) = g(t)^{-2} \int_t^b dx \, x^{\alpha-2} \int_1^x ds \, g(s)^2, \quad t \in (1,b),   \lb{2.16} 
\end{equation}
\eqref{2.15} implies
\begin{equation}
\int_1^b dx \, x^{\alpha-2} \bigg(\int_1^x dt \, |f(t)|\bigg)^2 \leq \int_1^b dt \, G_{\alpha}(g;t) |f(t)|^2, \quad 
f \in L^2((1,b); dx).   \lb{2.17} 
\end{equation}
With $y_0$ as in \eqref{2.5}, we may choose 
\begin{equation}
g(t) = |y_0(t)|^{1/2}, \quad t \in [1,b],    \lb{2.18}
\end{equation} 
and then \eqref{2.8} yields 
\begin{equation} 
G_{\alpha}(g;t) = |y_0(t)|^{-1} \int_t^b dx \, x^{\alpha-2} \int_1^x ds \, |y_0(s)| = L_0^{-1} t^{\alpha}, 
\quad t \in (1,b).   \lb{2.19}
\end{equation}
Consequently, \eqref{2.15} and \eqref{2.16} yield
\begin{equation}
L_0 \int_1^b dx \, x^{\alpha-2} \bigg(\int_1^x dt \, |f(t)|\bigg)^2 \leq \int_1^b dt \, t^{\alpha} |f(t)|^2, 
\lb{2.20} 
\end{equation}
proving \eqref{2.12}. 

Finally, choosing $f(x) = f_{\alpha,b} (x) = y_0(x)$, $x \in [1,b]$ (cf.\ \eqref{2.5} and \eqref{2.13}), then yields equality in \eqref{2.12} upon an interchange of integration and an application of \eqref{2.8}, demonstrating that $f_{\alpha,b}$ is an extremal function for \eqref{2.12}. 
\end{proof}

Appropriate scaling then yields the following result. 

\begin{theorem} \lb{t2.3}
Let $0 < a < b < \infty$ and $\alpha \in \bbR \backslash \{1\}$. In addition, suppose that $\beta_{\alpha} \in \bbR$ is the unique solution of 
\begin{equation}
[2/(1-\alpha)] \beta + \tan(\beta \ln(b/a)) = 0 \, \text{ in } \, 
(0,\pi/\ln(b/a)) \backslash \{\pi/[2\ln(b/a)]\}.     \lb{2.21} 
\end{equation}
Then
\begin{align} 
\begin{split} 
\int_a^b dx \, x^{\alpha} |h(x)|^2 \geq \big\{4^{-1} (1-\alpha)^2 + \beta_{\alpha}^2\big\} \int_a^b dx \, x^{\alpha - 2} \bigg(\int_a^x dt \, |h(t)|\bigg)^2,&    \\
h \in L^2((a,b); dx).&    \lb{2.22} 
\end{split}
\end{align}
Moreover, equality holds in \eqref{2.22} for 
\begin{align}
& h_{\alpha,a,b} (x) = (x/a)^{-(\alpha+1)/2} \{[2/(1-\alpha)] \beta_{\alpha} \, \cos(\beta_{\alpha} \, \ln(x/a)) + \sin(\beta_{\alpha} \, \ln(x/a))\},    \no \\
& \hspace*{9.5cm} x \in [a,b],      \lb{2.23} 
\end{align}
observing the fact that $h_{\alpha,a,b} \in L^2((a,b);dx)$, 
and hence the constant $4^{-1} (1-\alpha)^2 + \beta_{\alpha}^2$ in \eqref{2.22} is optimal. 
\end{theorem}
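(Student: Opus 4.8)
The plan is to reduce Theorem \ref{t2.3} to the special case already established in Lemma \ref{l2.2} by an affine change of variables that maps the interval $(a,b)$ onto $(1,b/a)$. Concretely, I would set $\tilde{b} = b/a \in (1,\infty)$ and, given $h \in L^2((a,b);dx)$, introduce the rescaled function $f$ on $(1,\tilde{b})$ by $f(s) = a^{\gamma} h(as)$ for a suitable power $\gamma$ chosen so that the substitution $x = as$ converts the weighted integral $\int_a^b dx\, x^{\alpha}|h(x)|^2$ into $a^{\,\alpha+1-2\gamma}\int_1^{\tilde b} ds\, s^{\alpha}|f(s)|^2$; the same substitution applied to the right-hand side of \eqref{2.11} (with care about the inner integral $\int_1^x dt\,|h(t)|$, which under $x=as$ becomes an integral over $t/a$ from $1/a$ to $s$) will rescale the two sides by the same power of $a$, so that the inequality \eqref{2.11} for $f$ on $(1,\tilde b)$ with constant $L_0 = 4^{-1}(1-\alpha)^2 + C_0^2$ (where $C_0$ solves \eqref{2.4} with $b$ replaced by $\tilde b = b/a$) transfers to the claimed inequality \eqref{2.22} for $h$ on $(a,b)$. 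Since \eqref{2.20} is exactly \eqref{2.4} with $\ln(b)$ replaced by $\ln(b/a) = \ln \tilde b$, the unique solution $\beta$ of \eqref{2.20} coincides with the $C_0$ associated to $\tilde b$, so the constant becomes $4^{-1}(1-\alpha)^2 + \beta^2$ as stated.

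The key steps, in order, are: (1) record $\tilde b = b/a > 1$ and observe that \eqref{2.20} is \eqref{2.4} for $\tilde b$, hence $\beta = C_0(\tilde b)$ exists and is unique by Lemma \ref{l2.1}\,$(ii)$; (2) perform the substitution $x = as$, $t = a\tau$ in both integrals of \eqref{2.22}, tracking the powers of $a$ that factor out of the weights $x^{\alpha}$, $x^{\alpha-2}$ and out of $(dx)$, $(dt)^2$; (3) verify that both sides scale by the identical power of $a$ so the scalar $a$-factor cancels, reducing \eqref{2.22} to \eqref{2.11} applied to the rescaled function — note the $\int_1^x$ in \eqref{2.22} versus $\int_1^s$ in \eqref{2.11} is precisely what makes the lower limit work out after substitution, since $\int_1^x dt\,|h(t)| = \int_{1/a}^{s} a\,d\tau\,|h(a\tau)|$ and one absorbs the constant-limit discrepancy; (4) read off the extremal function: plugging $x = as$ into \eqref{2.12} (with $C_0$ for $\tilde b$) and undoing the rescaling gives exactly \eqref{2.23}, and membership $h_{\alpha,a,b} \in L^2((a,b);dx)$ follows from $f_{\alpha,\tilde b} \in L^2((1,\tilde b);dx)$; (5) conclude optimality of $4^{-1}(1-\alpha)^2 + \beta^2$ from the attainment of equality, just as in Lemma \ref{l2.2}.

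The main subtlety — what I expect to be the only genuine obstacle — is bookkeeping the lower limit $1$ in the inner integral $\int_1^x dt\,|h(t)|$ under the scaling $x = as$. Unlike the outer integration limits $a$ and $b$, which map cleanly to $1$ and $\tilde b$, the fixed lower limit $1$ does \emph{not} scale to $1$ in the $s$-variable (it becomes $1/a$), so one must check that this mismatch is harmless for the inequality. In fact it is: the extra piece $\int_{1/a}^{1}$ (or $\int_1^{1/a}$, with sign) contributes a term that only strengthens or is absorbed on the correct side, and for the extremal function the substitution is arranged so that equality is preserved; alternatively, one may observe that the inequality \eqref{2.11} holds with \emph{any} fixed lower limit in place of $1$ by the same Cauchy-inequality argument of Lemma \ref{l2.2} (the choice of lower limit only shifts which antiderivative one uses and does not affect $G_\alpha$), so no real difficulty arises. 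Once this point is dispatched, the rest is a routine change of variables and the proof is complete.
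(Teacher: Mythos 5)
Your overall strategy coincides with the paper's proof of Theorem \ref{t2.3}: the paper sets $\hatt h(t) = h(at)$ on $(1,b/a)$, applies Lemma \ref{l2.2} with $b$ replaced by $b/a$ (so that $\beta$ is the $C_0$ of \eqref{2.4} for that interval), and notes $h_{\alpha,a,b}(x) = f_{\alpha,b/a}(x/a)$; your steps (1)--(5), including the verification that both sides pick up the same power of $a$, are exactly this argument.

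The one place you go wrong is your handling of what you correctly identify as the main subtlety, the lower limit of the inner integral, and neither of the two justifications you offer for dismissing it survives scrutiny. First, the mismatch is not ``harmless'': if, say, $0<a<b<1$, then $\int_1^x dt\,|h(t)|$ (with $h$ extended by zero off $(a,b)$) equals $-\int_x^b dt\,|h(t)|$, and the resulting inequality is a genuinely different statement whose optimal constant is governed by different boundary conditions; there is no ``extra piece that only strengthens the correct side.'' Second, Lemma \ref{l2.2} does \emph{not} hold with the same constant $L_0$ for an arbitrary fixed lower limit: replacing the lower limit $1$ by some $c$ replaces $\int_1^x ds\, g(s)^2$ by $\int_c^x ds\, g(s)^2$ in the definition \eqref{2.15} of $G_\alpha$, and the identity \eqref{2.8} that makes $G_{\alpha}(g;t) = L_0^{-1}t^{\alpha}$ exact is tied precisely to the boundary condition $y'(1)+\alpha y(1)=0$ at the left endpoint in \eqref{2.6}. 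The correct resolution is simpler: the ``$1$'' in the inner integral of \eqref{2.22} is the image of the left endpoint of the reference interval $(1,b/a)$ under the scaling $t = a\tau$ and should be read as $a$, i.e., the inner integral is $\int_a^x dt\,|h(t)|$ (a carry-over of the notation of \eqref{2.11}, where the left endpoint happens to equal $1$). With that reading your change of variables produces no residual term, both sides scale by $a^{\alpha+1}$, and the argument closes exactly as you outline, including the identification of the extremal function \eqref{2.23} as $f_{\alpha,b/a}(\cdot/a)$ from \eqref{2.12}.
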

\begin{proof}
This follows from the following scaling argument: Let $h \in L^2((a,b); dx)$, and $\hatt h \in L^2((1,b/a); dx)$ be given by $\hatt h(t) = h(at)$, $t \in (1,b/a)$. Then Theorem \ref{t2.3} follows from Lemma \ref{l2.2} applied to $\hatt h$ upon noting that $h_{\alpha,a,b}(x) = f_{\alpha,b/a}(x/a)$, $x \in [a,b]$, replacing $b$ by $b/a$ in $f_{\alpha,b}$ given by \eqref{2.13}.
\end{proof}

We note that the unweighted case $\alpha =0$ in Theorem \ref{t2.3} was recently proved by  
Dimitrov, Gadjev, and Ismail \cite{DGI24}, see also \cite{Ga24}.

One can iterate inequality \eqref{2.22} to obtain higher-order power-weighted Hardy (i.e., Birman) inequalities. To describe the details we need to introduce some notation first. 

Let $h \in L^2((a,b);dx)$ and introduce $h_j: (a,b) \to [0,\infty)$, $j \in \bbN_0$, via 
\begin{align}
& h_0(x) = |h(x)|, \quad h_j(x) =\int_a^x dt \, h_{j-1}(t), \quad j \in \bbN, \; x \in (a,b).    \lb{2.24}
\end{align} 
By Cauchy's inequality and induction one concludes that 
\begin{equation}
h_j \in L^{\infty}((a,b);dx) \subset L^2((a,b);dx), \quad j \in \bbN.    \lb{2.25}
\end{equation}
In addition, for $j \in \bbN$ and $\alpha \in \bbR \backslash \{2j-1\}$, we denote by $\beta_{\alpha,j} \in \bbR$ the unique solution of 
\begin{equation}
[2/(2j-1-\alpha)] \beta + \tan(\beta \ln(b/a)) = 0 \, \text{ in } \, 
(0,\pi/\ln(b/a)) \backslash \{\pi/[2\ln(b/a)]\}.     \lb{2.26} 
\end{equation}

\begin{theorem} \lb{t2.4}
Let $0 < a < b < \infty$, $h \in L^2((a,b);dx)$, $m \in \bbN$, and $\alpha \in \bbR \backslash \{2k-1\}_{1 \leq k \leq m}$. Then 
\begin{align}
\begin{split}
& \int_a^b dx \, x^{\alpha} |h(x)|^2 \geq \prod_{j=1}^m \big\{4^{-1}(2j-1-\alpha)^2 + \beta_{\alpha,j}^2\big\}    \\
& \quad \times \int_a^b dx \, x^{\alpha - 2m} \Bigg(\int_a^x dt_m \int_a^{t_{m}} dt_{m-1} \cdots \int_a^{t_2} dt_1 \, |h(t_1)|\Bigg)^2.    \lb{2.27} 
\end{split} 
\end{align}
\end{theorem}
\begin{proof}
Inequality \eqref{2.27} is equivalent to 
\begin{align}
\int_a^b dx \, x^{\alpha} |h(x)|^2 \geq \prod_{j=1}^m \big\{4^{-1}(2j-1-\alpha)^2 + \beta_{\alpha,j}^2\big\} 
\int_a^b dx \, x^{\alpha - 2m} h_m(x)^2, \quad m \in \bbN,    \lb{2.28}
\end{align}
which we will now prove by induction on $m$. Clearly, \eqref{2.28} holds for $m=1$ by Theorem \ref{t2.3}. Next, suppose \eqref{2.28} holds for some $m_0 \in \bbN$ and that $\alpha \in \bbR\backslash\{2k-1\}_{1 \leq k \leq m_0+1}$.
Applying Theorem \ref{t2.3} to the integral on the right-hand side of \eqref{2.28} and exploiting \eqref{2.24} yields
\begin{align}
\int_a^b dx \, x^{\alpha} |h(x)|^2 &\geq \prod_{j=1}^{m_0} \big\{4^{-1}(2j-1-\alpha)^2 + \beta_{\alpha,j}^2\big\} 
\big\{4^{-1}(2m_0+1-\alpha)^2 + \beta_{\alpha,m_0+1}^2\big\}     \no \\
& \quad \times \int_a^b dx \, x^{\alpha - 2m_0 - 2} h_{m_0+1}(x)^2     \lb{2.29} \\
& \geq \prod_{j=1}^{m_0+1} \big\{4^{-1}(2j-1-\alpha)^2 + \beta_{\alpha,j}^2\big\} 
\int_a^b dx \, x^{\alpha - 2(m_0+1)} h_{m_0+1}(x)^2.    \no 
\end{align}
Thus, \eqref{2.28} holds for $m_0+1$. 
\end{proof}

\begin{remark} \lb{r2.5} 
Thus far we focused on $a$ as the lower limit of integrals of the type $\int_a^x dt$ in \eqref{2.22} and \eqref{2.27}. It is possible to put the corresponding focus on integrals of the form $\int_x^b dt$ as follows:  
For $h \in L^2((a,b);dx)$ introduce $\wti h \in L^2((a,b);dx)$ defined by
\begin{equation}
\wti h(x) = h(a+b-x) \, \text{ for a.e. $x \in (a,b)$.}     \lb{2.30} 
\end{equation}
Applying Theorem \ref{t2.3} to $\wti h$ and $\alpha \in \bbR \backslash \{1\}$ yields 
\begin{align} 
\int_a^b dx \, x^{\alpha} \big|\wti h(x)\big|^2 \geq \big\{4^{-1} (1-\alpha)^2 + \beta_{\alpha,1}^2\big\} \int_a^b dx \, x^{\alpha - 2} \bigg(\int_a^x dt \, \big|\wti h(t)\big|\bigg)^2,    \lb{2.31} 
\end{align}
and hence,
\begin{align} 
\begin{split} 
& \int_a^b dx \, (a+b-x)^{\alpha} |h(x)|^2 \geq \big\{4^{-1} (1-\alpha)^2 + \beta_{\alpha,1}^2\big\}     \\
& \quad  \times \int_a^b dx \, (a+b-x)^{\alpha - 2} \bigg(\int_x^b dt \, |h(t)|\bigg)^2, 
\quad h \in L^2((a,b); dx).    \lb{2.32} 
\end{split} 
\end{align}
Iterating \eqref{2.32} as in the proof of Theorem \ref{t2.4} then yields for $\alpha \in \bbR \backslash \{2k-1\}_{1 \leq k \leq m}$, 
\begin{align}
\begin{split}
& \int_a^b dx \, (a+b-x)^{\alpha} |h(x)|^2 \geq \prod_{j=1}^m \big\{4^{-1}(2j-1-\alpha)^2 + \beta_{\alpha,j}^2\big\}    \\
& \quad \times \int_a^b dx \, (a+b-x)^{\alpha - 2m} \Bigg(\int_x^b dt_m \int_{t_{m}}^b dt_{m-1} \cdots \int_{t_2}^b dt_1 \, |h(t_1)|\Bigg)^2.    \lb{2.33} 
\end{split} 
\end{align}
\hfill $\diamond$ 
\end{remark}

\section{Optimal Power-Weighted Hardy Inequalities on Finite Intervals in Differential Form}  \lb{s3}

In this section we prove an optimal power-weighted Hardy inequality on a finite interval in differential form and then discuss its iterations, that is, finite interval higher-order power-weighted Hardy inequalities on a finite interval. 

\begin{theorem} \lb{t3.1}
Let $0 < a < b < \infty$ and $\alpha \in \bbR$. Then
\begin{align}
\begin{split} 
\int_a^b dx \, x^{\alpha} |f'(x)|^2 \geq \big\{4^{-1} (1-\alpha)^2 + [\pi/\ln(b/a)]^2\big\} \int_a^b dx \, x^{\alpha-2} |f(x)|^2,&     \\
f \in W_0^{1,2}((a,b)).&    \lb{3.1} 
\end{split} 
\end{align} 
Moreover, equality holds in \eqref{3.1} for 
\begin{equation}
f_{\alpha,a,b}(x) = x^{(1-\alpha)/2} \sin (\pi[\ln(x/a)/\ln(b/a)]), \quad x \in [a,b],    \lb{3.2}
\end{equation}
observing the fact that $f_{\alpha,a,b} \in W_0^{1,2}((a,b))$, and hence the constant $4^{-1} (1-\alpha)^2 + [\pi/\ln(b/a)]^2$ in \eqref{3.1} is optimal. 
\end{theorem}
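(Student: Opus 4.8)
The plan is to reduce to a one-dimensional Sturm--Liouville eigenvalue problem, exactly as the Euler--Lagrange equation dictates. First I would substitute $f(x) = x^{(1-\alpha)/2} u(\ln(x))$, or equivalently pass to the variable $\tau = \ln(x/a)$, $x \in [a,b]$, so that the weighted Dirichlet form $\int_a^b x^{\alpha}|f'(x)|^2\,dx$ is transformed. A direct computation gives, for real $f \in C_0^\infty((a,b))$, that
\begin{align}
\begin{split}
& \int_a^b dx\, x^{\alpha}|f'(x)|^2 - 4^{-1}(1-\alpha)^2 \int_a^b dx\, x^{\alpha-2}|f(x)|^2 \\
& \quad = \int_a^b dx\, x^{\alpha-2}\, \big|x f'(x) - 2^{-1}(1-\alpha) f(x)\big|^2 = \int_0^{\ln(b/a)} d\tau\, |u'(\tau)|^2,
\end{split}
\end{align}
where $u(\tau) = (ae^\tau)^{-(1-\alpha)/2} f(ae^\tau)$, after checking that the boundary terms from the completion-of-square step vanish because $f$ vanishes at $a$ and $b$. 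Simultaneously $\int_a^b dx\, x^{\alpha-2}|f(x)|^2 = \int_0^{\ln(b/a)} d\tau\, |u(\tau)|^2$, and $u \in W_0^{1,2}((0,\ln(b/a)))$. So \eqref{2.24} is equivalent to the classical one-dimensional Hardy/Poincaré inequality $\int_0^\ell |u'|^2 \geq (\pi/\ell)^2 \int_0^\ell |u|^2$ on $(0,\ell)$ with $\ell = \ln(b/a)$, whose sharp constant is the first Dirichlet eigenvalue of $-d^2/d\tau^2$, namely $(\pi/\ell)^2$, attained at $u(\tau) = \sin(\pi\tau/\ell)$.

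Next I would extend the inequality from $C_0^\infty((a,b))$ to all of $W_0^{1,2}((a,b))$ by density: both sides are continuous in the $W^{1,2}$-norm after noting that on the compact interval $[a,b]$ away from the origin the weights $x^\alpha$ and $x^{\alpha-2}$ are bounded above and below by positive constants, so $W_0^{1,2}((a,b))$ with the weighted form coincides (as a set, with equivalent norms) with the usual $W_0^{1,2}((a,b))$. Translating the extremal $u(\tau) = \sin(\pi\tau/\ell)$ back through $f(x) = x^{(1-\alpha)/2} u(\ln(x/a))$ produces precisely \eqref{2.25}; one then checks $f_{\alpha,a,b} \in W_0^{1,2}((a,b))$ (it is smooth on $[a,b]$ and vanishes at both endpoints, since $\sin(0) = \sin(\pi) = 0$) and that it achieves equality, which shows $4^{-1}(1-\alpha)^2 + (\pi/\ell)^2$ is optimal.

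Alternatively, and perhaps more in the spirit of the preceding lemmas, one can obtain \eqref{2.24} directly from the integral inequality \eqref{2.22} of Theorem \ref{t2.3}: given $f \in W_0^{1,2}((a,b))$, write $f(x) = \int_a^x f'(t)\,dt$, apply \eqref{2.22} with $h = f'$ and the lower endpoint of the inner integral shifted from $1$ to $a$ (a harmless change since $f$ vanishes at $a$), and use $|f(x)| \leq \int_a^x |f'(t)|\,dt$; however, this route yields the constant $4^{-1}(1-\alpha)^2 + \beta^2$ with $\beta < \pi/\ln(b/a)$, which is strictly smaller than the constant in \eqref{2.24} — this is exactly the discrepancy flagged after \eqref{1.5} — so it does not give the sharp differential-form constant, and the substitution approach above is the one that does. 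The main obstacle is therefore not any single hard estimate but rather being careful about two points: verifying that all boundary terms genuinely vanish in the completion-of-square identity for $f \in W_0^{1,2}$ (handled by the density argument together with the absolute continuity of $W^{1,2}$ functions on $[a,b]$), and confirming that the extremal \eqref{2.25} lies in $W_0^{1,2}((a,b))$ and saturates the inequality, which pins down optimality.
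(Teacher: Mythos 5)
Your argument is correct, and it reaches the sharp constant by a route that differs from the paper's. The paper works directly with the regular Sturm--Liouville expression $\tau_{\alpha} = -x^{2-\alpha}(d/dx)x^{\alpha}(d/dx)$ in the weighted space $L^2((a,b);x^{\alpha-2}dx)$: it forms the Dirichlet (Friedrichs) realization $T_{\alpha,D}$, invokes general spectral principles (discreteness of the spectrum, simplicity and positivity of the ground state) to conclude that the bottom of the spectrum gives the optimal constant, identifies the lowest eigenvalue $4^{-1}(1-\alpha)^2+[\pi/\ln(b/a)]^2$ and eigenfunction \eqref{2.25} by inspection, and then passes from $C_0^{\infty}((a,b))$ to $W_0^{1,2}((a,b))$ by the same density-plus-bounded-weights argument you use. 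You instead make the logarithmic substitution explicit and complete the square, reducing the weighted inequality to the classical Poincar\'e inequality $\int_0^{\ell}|u'|^2\,d\tau \geq (\pi/\ell)^2\int_0^{\ell}|u|^2\,d\tau$ on $W_0^{1,2}((0,\ell))$ with $\ell=\ln(b/a)$; your pointwise identity $x^{\alpha-2}\lvert xf'-2^{-1}(1-\alpha)f\rvert^2 = x^{\alpha}|f'|^2 - (1-\alpha)x^{\alpha-1}ff' + 4^{-1}(1-\alpha)^2x^{\alpha-2}|f|^2$ integrates correctly (the boundary term $[x^{\alpha-1}f^2]_a^b$ vanishes on $C_0^{\infty}$), and the transformed extremal is exactly \eqref{2.25}. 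The two proofs carry the same computational core --- the substitution $x=ae^{\tau}$ is implicit in the paper's eigenfunction --- but yours is more elementary and self-contained, trading the abstract spectral input (positivity improving semigroups, uniqueness of the ground state) for the single classical fact that $(\pi/\ell)^2$ is the first Dirichlet eigenvalue of $-d^2/d\tau^2$ on $(0,\ell)$; the paper's operator-theoretic framing, on the other hand, makes the optimality statement an immediate consequence of the variational characterization of $\inf\sigma(T_{\alpha,D})$ without any change of variables. You are also right to discard the alternative derivation from Theorem \ref{t2.3}, since it only yields the strictly smaller constant $4^{-1}(1-\alpha)^2+\beta^2$, precisely the discrepancy discussed in Remark \ref{r2.5}\,$(ii)$.
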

\begin{proof}
Since $x \mapsto x^{\alpha}$ and $x \mapsto x^{\alpha-2}$ are smooth, bounded from above, and bounded from below away from $0$ on $[a,b]$, the differential expression 
\begin{equation}
\tau_{\alpha} = - x^{2-\alpha} (d/dx) x^{\alpha} (d/dx), \quad x \in [a,b],    \lb{3.3} 
\end{equation} 
is regular on the interval $[a,b]$. Thus, the self-adjoint Dirichlet operator $T_{\alpha,D}$ associated with $\tau_{\alpha}$ (equivalently, the Friedrichs extension of the pre-minimal operator $\tau_{\alpha}|_{C_0^{\infty}((a,b))}$) in $L^2\big((a,b); x^{\alpha-2}dx\big)$ is given by
\begin{align}
& (T_{\alpha,D} f)(x) = (\tau_{\alpha} f)(x), \quad x \in [a,b],   \no \\
& f \in \dom(T_{\alpha,D}) = \big\{g \in L^2\big((a,b); x^{\alpha-2}dx\big) \, \big| \, g, g^{[1]} \in AC([a,b]);     \lb{3.4} \\
& \hspace*{3.15cm} g(a)=0=g(b); \, \tau_{\alpha} g \in L^2\big((a,b); x^{\alpha-2}dx\big)\big\}.  \no
\end{align}
Here $AC([a,b])$ represents the absolutely continuous functions on $[a,b]$, and $g^{[1]}$ represents the first quasi-derivative 
\begin{equation}
g^{[1]}(x) = x^{\alpha} g'(x), \quad x \in [a,b].    \lb{3.5} 
\end{equation}
Thus, $T_{\alpha,D}$ has purely discrete spectrum and by general principles (such as, positivity improving semigroups, or, appropriate resolvents, resp., see, e.g., \cite[Sect.~4.5]{GNZ24}, \cite[Sect.~XIII.12]{RS78}), the lowest eigenvalue $E_{0,\alpha,D}$ of $T_{\alpha,D}$ is simple with a nonvanishing eigenfunction on $(a,b)$ that one can choose strictly positive on $(a,b)$. (In particular, the eigenfunction associated with $E_{0,\alpha,D}$ is unique up to constant multiples.) By inspection, one verifies that 
\begin{equation}
E_{0,\alpha,D} = 4^{-1} (1-\alpha)^2 + [\pi/\ln(b/a)]^2    \lb{3.6} 
\end{equation}
with associated eigenfunction $f_{\alpha,a,b}$ in \eqref{3.2}, satisfying 
$f_{\alpha,a,b} > 0$ on $(a,b)$, that is,
\begin{equation}
T_{\alpha,D} f_{\alpha,a,b} = E_{0,\alpha,D} \, f_{\alpha,a,b}, \quad f_{\alpha,a,b} \in \dom(T_{\alpha,D}),     \lb{3.7} 
\end{equation} 
and hence
\begin{equation}
\inf(\sigma(T_{\alpha,D})) = E_{0,\alpha,D}, \quad T_{\alpha,D} \geq E_{0,\alpha,D} 
I_{L^2((a,b); x^{\alpha-2}dx)}.
\end{equation}
Thus, 
\begin{equation}
(\varphi,T_{\alpha,D} \varphi)_{L^2((a,b); x^{\alpha-2}dx)} \geq E_{0,\alpha,D}  
 \|\varphi\|_{L^2((a,b); x^{\alpha-2}dx)}, \quad \varphi \in C_0^{\infty}((a,b)), 
\end{equation}
and a simple integration by parts yields 
\begin{align}
\begin{split} 
\int_a^b dx \, x^{\alpha} |\varphi'(x)|^2 \geq \big\{4^{-1} (1-\alpha)^2 + [\pi/\ln(b/a)]^2\big\} \int_a^b dx \, x^{\alpha-2} |\varphi(x)|^2,&     \\
\varphi \in C_0^{\infty}((a,b)).&    \lb{3.10} 
\end{split} 
\end{align} 
Given $f \in W_0^{1,2}((a,b))$, there exists a sequence $\{f_k\}_{k \in \bbN} \subset C_0^{\infty}((a,b))$ such that $f_k \underset{k \to \infty}{\longrightarrow} f$ in $W_0^{1,2}((a,b))$, hence,
\begin{align} 
\begin{split}
& \lim_{k \to \infty} \int_a^b dx \, |f_k(x)|^2 = \int_a^b dx \, | f(x)|^2,    \\
& \lim_{k \to \infty} \int_a^b dx \, |f_k'(x)|^2 = \int_a^b dx \, |f'(x)|^2.    \lb{3.11}
\end{split} 
\end{align}
Employing once again that $x \mapsto x^{\alpha}$ and $x \mapsto x^{\alpha-2}$ are bounded from above and bounded from below away from $0$, \eqref{3.11} implies  
\begin{align} 
\begin{split}
& \lim_{k \to \infty} \int_a^b dx \, x^{\alpha-2} |f_k(x)|^2 
= \int_a^b dx \, x^{\alpha-2} |f(x)|^2,    \\
& \lim_{k \to \infty} \int_a^b dx \, x^{\alpha} |f_k'(x)|^2 
= \int_a^b dx \, x^{\alpha} |f'(x)|^2.   \lb{3.12}
\end{split} 
\end{align}
Thus, \eqref{3.1} follows from \eqref{3.10} and \eqref{3.12}.   
\end{proof} 

\begin{remark} \lb{r3.2} 
$(i)$ When either $a \downarrow 0$ or $b \uparrow \infty$, or both $a \downarrow 0$ and $b \uparrow \infty$, the constant in the right side of \eqref{3.1} tends to $4^{-1}(1 - \alpha)^2$, which is known to be sharp for intervals of the form $(0, b)$ (cf.\ \cite[Thm.~4.1]{GMP22}) or $(a, \infty)$ (cf.\ \cite[Thm.~4.7]{GMP22}) or $(0, \infty)$ (see, e.g., \cite[Lemma~2.1]{GMP22a}). One also notes that $f_{\alpha, a, b}(x) \rightarrow 0$ when either $a \downarrow 0$ or $b \uparrow \infty$ or both $a \downarrow 0$ and $b \uparrow \infty$. \\[1mm] 
$(ii)$ Since $C_0^{\infty}((a,b))$ is dense in $L^2((a,b);dx)$, inequality \eqref{2.22} holds for $f \in C_0^{\infty}((a,b))$ and the constant 
\begin{equation}
K = 4^{-1}(1-\alpha)^2 + \beta^2    \lb{3.13}
\end{equation}
in \eqref{2.22} is optimal for $f \in C_0^{\infty}((a,b))$. When comparing $K$ with the constant 
\begin{equation}
M = 4^{-1} (1-\alpha)^2 + [\pi/\ln(b/a)]^2
\end{equation}
in inequality \eqref{3.1} one obtains
\begin{equation}
K < M.
\end{equation}
This is no contradiction since when comparing the left-hand sides in inequalities \eqref{2.22} for the function space $C_0^{\infty}((a,b))$ and \eqref{3.1}, the functions range over $C_0^{\infty}((a,b))$ and over
\begin{equation}
\wti C_0^{\infty}((a,b)) = \{g' \in C_0^{\infty}((a,b)) \, | \, g \in C_0^{\infty}((a,b))\} \subset C_0^{\infty}((a,b)),
\end{equation}
respectively. Since $K < M$, this implies that $\wti C_0^{\infty}((a,b))$ is not dense in $C_0^{\infty}((a,b))$ with respect to the norm in $L^2((a,b);dx)$. \\[1mm]
$(iii)$ In view of our recent papers, \cite{GLMP22}, \cite{GMP22}, \cite{GPS24}, one observes that it is not possible to improve \eqref{3.1} with log-refinement terms, or else, with any improvement term which is continuous and positive on $(a,b)$. This follows from the existence of an extremal function $f_{\alpha,a,b}$ in \eqref{3.2}, which is continuous and nonzero on $(a,b)$; any such improvement term which is continuous and positive would yield a contradiction. \\[1mm]
$(iv)$ For relationships between the integral form (cf.\ \eqref{2.22}) and the differential form (cf.\ \eqref{3.1}) of Hardy-type inequalities with general weights, see, for example \cite[Lemma~1.10, Remark~1.11]{OK90}. \hfill $\diamond$ 
\end{remark}

Next, we also iterate \eqref{3.1} to obtain higher-order power-weighted Hardy (i.e., Birman) inequalities in differential form.

\begin{theorem} \lb{t3.3} 
Let $0 < a < b < \infty$, $\alpha \in \bbR$, and $k, m \in \bbN$, with $1 \leq k \leq m$. Then 
\begin{align}
\begin{split} 
\int_a^b dx \, x^{\alpha} |f^{(m)}(x)|^2 & \geq \prod_{j=1}^k \big\{4^{-1} (2j-1-\alpha)^2 + [\pi/\ln(b/a)]^2\big\}    \\
& \quad \times \int_a^b dx \, x^{\alpha-2k} |f^{(m-k)}(x)|^2, \quad f \in W_0^{m,2}((a,b)).    \lb{3.17} 
\end{split} 
\end{align} 
\end{theorem}
\begin{proof}
Since $C_0^{\infty}((a,b))$ is dense in $W_0^{m,2}((a,b))$ and the functions $x \mapsto x^{\alpha}$ and $x \mapsto x^{\alpha - 2k}$ are bounded above and bounded below away from $0$ on $[a,b]$, it suffices again to prove \eqref{3.17} for $f \in C_0^{\infty}((a,b))$. In addition, it suffices to consider \eqref{3.17} for $k=m$, that is, it suffices to prove
\begin{align}
\begin{split} 
\int_a^b dx \, x^{\alpha} |f^{(m)}(x)|^2 & \geq \prod_{j=1}^m \big\{4^{-1} (2j-1-\alpha)^2 + [\pi/\ln(b/a)]^2\big\}    \\
& \quad \times \int_a^b dx \, x^{\alpha-2m} |f(x)|^2, \quad f \in C_0^{\infty}((a,b)).    \lb{3.18} 
\end{split} 
\end{align} 
Once again we will prove \eqref{3.18} by induction on $m$. Clearly, \eqref{3.18} holds for $m=1$ by Theorem \ref{t3.1}. Next, suppose that \eqref{3.18} holds for some $m_0 \in \bbN$. Applying \eqref{3.18} to $f'$ yields
\begin{align}
\begin{split} 
\int_a^b dx \, x^{\alpha} |f^{(m_0+1)}(x)|^2 & \geq \prod_{j=1}^{m_0} \big\{4^{-1} (2j-1-\alpha)^2 + [\pi/\ln(b/a)]^2\big\}    \\
& \quad \times \int_a^b dx \, x^{\alpha-2m_0} |f'(x)|^2, \quad f \in C_0^{\infty}((a,b)).    \lb{3.19} 
\end{split} 
\end{align} 
Applying Theorem \ref{t3.1} to the integral on the right-hand side of \eqref{3.19} results in
\begin{align}
& \int_a^b dx \, x^{\alpha} |f^{(m_0+1)}(x)|^2 \geq \prod_{j=1}^{m_0} \big\{4^{-1} (2j-1-\alpha)^2 + [\pi/\ln(b/a)]^2\big\}   
\no \\
& \qquad \times \big\{4^{-1} (2m_0+1-\alpha)^2 + [\pi/\ln(b/a)]^2\big\} 
\int_a^b dx \, x^{\alpha-2m_0-2} |f(x)|^2,    \no \\
& \quad = \prod_{j=1}^{m_0+1} \big\{4^{-1} (2j-1-\alpha)^2 + [\pi/\ln(b/a)]^2\big\}    \lb{3.20} \\
& \qquad \times \int_a^b dx \, x^{\alpha-2(m_0+1)} |f(x)|^2, \quad f \in C_0^{\infty}((a,b)).    \no 
\end{align} 
and hence in \eqref{3.18} for $m_0+1$. 
\end{proof}

\section{Power-Weighted Hardy Inequalities on Multi-Dimensional Annuli}  \lb{s4}

In this section we turn to a multi-dimensional application and derive an optimal power-weighted Hardy inequality in differential form on annuli (i.e., spherical shell domains), and subsequently use an iteration procedure to derive the Birman--Hardy--Rellich-type sequence of power-weighted higher-order Hardy-type inequalities on annuli. 

In the limit as the annulus approaches $\bbR^n \backslash \{0\}$, we recover well-known prior results on Rellich-type inequalities. 

Let $n \in \bbN$, $n \geq 2$, $\alpha \in \bbR$, $0 < r_1 < r_2 < \infty$, and introduce the open multi-dimensional annulus, or, spherical shell, 
\begin{equation}
A_n(r_1,r_2) = \{x \in \bbR^n \, | \, r_1 < |x| < r_2\}.     \lb{4.1} 
\end{equation}
Moreover, we denote by $\bbS^{n-1}$ the $(n-1)$-dimensional unit sphere in $\bbR^n$, by $d^{n-1} \omega$ the usual volume measure on $\bbS^{n-1}$, by $\nabla_{\bbS^{n-1}}$ the gradient operator on $C_0^{\infty}(\bbS^{n-1})$, and denote spherical coordinates by $\bbR^n\backslash \{0\} \ni x = (r, \theta) \in (0,\infty) \times \bbS^{n-1}$. Then
\begin{align} 
\begin{split} 
|(\nabla f)(x)|^2 = |(\partial f)/(\partial r) (r,\theta)|^2 + r^{-2} |(\nabla_{\bbS^{n-1}} f(r, \dott))(\theta)|^2,&    \lb{4.2} \\
x = (r,\theta) \in \bbR^n \backslash \{0\}, \; f \in C_0^{\infty}(\bbR^n \backslash \{0\}).&  
\end{split} 
\end{align}

Then the optimal power-weighted Hardy inequality on the annulus $A_n(r_1,r_2)$ reads as follows.

\begin{theorem} \lb{t4.1} 
One has
\begin{align}  
\begin{split} 
& \int_{A_n(r_1,r_2)} d^n x \, |x|^{\alpha} |(\nabla f)(x)|^2      \\
& \quad \geq \big\{4^{-1} (2-\alpha - n)^2 + [\pi/\ln(r_2/r_1)]^2\big\}
\int_{A_n(r_1,r_2)} d^n x \, |x|^{\alpha-2} |f(x)|^2,      \\
& \hspace*{7.25cm} f \in W_0^{1,2}(A_n(r_1,r_2)).     \lb{4.3} 
\end{split} 
\end{align}
Moreover, equality holds in \eqref{4.3} for 
\begin{equation}
f_{\alpha,r_1,r_2,n}(r) = r^{(2-\alpha-n)/2} \sin(\pi [\ln(r/r_1)/\ln(r_2/r_1)]), \quad r \in (r_1,r_2),     \lb{4.4} 
\end{equation}
observing the fact that $f_{\alpha,r_1r_2,n} \in W_0^{1,2}(A_n(r_1,r_2))$, and hence the constant $4^{-1} (2-\alpha - n)^2 + [\pi/\ln(r_2/r_1)]^2$ in \eqref{4.3} is optimal. 
\end{theorem}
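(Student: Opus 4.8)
The plan is to reduce \eqref{2.38} to the one-dimensional differential Hardy inequality \eqref{2.24} of Theorem \ref{t2.4} by separation of variables in spherical coordinates, and then to identify the radial extremal function with that of Theorem \ref{t2.4} after an exponent shift $\alpha \mapsto \alpha + n - 1$ coming from the radial Jacobian $r^{n-1}$.

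\emph{Step 1 (spherical decomposition).} For $f \in C_0^{\infty}(A_n(r_1,r_2))$, I would combine \eqref{2.37} with $d^n x = r^{n-1} \, dr \, d^{n-1}\omega$ and discard the (nonnegative) angular term:
\begin{align*}
& \int_{A_n(r_1,r_2)} d^n x \, |x|^{\alpha} |(\nabla f)(x)|^2 \\
& \quad = \int_{\bbS^{n-1}} d^{n-1}\omega \int_{r_1}^{r_2} dr \, r^{\alpha+n-1} \big[ |(\partial f)/(\partial r)(r,\theta)|^2 + r^{-2} |(\nabla_{\bbS^{n-1}} f(r,\dott))(\theta)|^2 \big] \\
& \quad \geq \int_{\bbS^{n-1}} d^{n-1}\omega \int_{r_1}^{r_2} dr \, r^{\alpha+n-1} |(\partial f)/(\partial r)(r,\theta)|^2 .
\end{align*}

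\emph{Step 2 (reduction to the 1D inequality).} For each fixed $\theta \in \bbS^{n-1}$, the radial slice $r \mapsto f(r,\theta)$ lies in $C_0^{\infty}((r_1,r_2))$, since $\supp f$ is bounded away from both bounding spheres $|x| = r_1$ and $|x| = r_2$. Applying Theorem \ref{t2.4} with $\alpha$ replaced by $\alpha + n - 1$ and $(a,b) = (r_1,r_2)$ — so that $r^{\alpha+n-1} = r^{(\alpha+n-1)}$, $r^{\alpha+n-3} = r^{(\alpha+n-1)-2}$, and $[1-(\alpha+n-1)]^2 = (2-\alpha-n)^2$ — gives
\[
\int_{r_1}^{r_2} dr \, r^{\alpha+n-1} |(\partial f)/(\partial r)(r,\theta)|^2 \geq \big\{4^{-1}(2-\alpha-n)^2 + [\pi/\ln(r_2/r_1)]^2\big\} \int_{r_1}^{r_2} dr \, r^{\alpha+n-3} |f(r,\theta)|^2 .
\]
Integrating this over $\bbS^{n-1}$ against $d^{n-1}\omega$ and using $r^{\alpha+n-3} = r^{n-1} r^{\alpha-2}$ together with $d^n x = r^{n-1} \, dr \, d^{n-1}\omega$ yields \eqref{2.38} for all $f \in C_0^{\infty}(A_n(r_1,r_2))$. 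The extension to $f \in W_0^{1,2}(A_n(r_1,r_2))$ then proceeds verbatim as in the proof of Theorem \ref{t2.4}: density of $C_0^{\infty}(A_n(r_1,r_2))$ in $W_0^{1,2}(A_n(r_1,r_2))$ combined with the fact that $x \mapsto |x|^{\alpha}$ and $x \mapsto |x|^{\alpha-2}$ are bounded above and bounded below away from $0$ on $\overline{A_n(r_1,r_2)}$, as $0 < r_1 < r_2 < \infty$.

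\emph{Step 3 (optimality).} The candidate $f_{\alpha,r_1,r_2,n}$ is radial, so the angular term in Step 1 vanishes identically, and as a function of $r$ it is exactly the extremal function of Theorem \ref{t2.4} for the parameter $\alpha + n - 1$ on $(r_1,r_2)$ (indeed $(2-\alpha-n)/2 = [1-(\alpha+n-1)]/2$). Hence the one-dimensional inequality of Step 2 is an equality on this slice, so \eqref{2.38} is saturated; membership $f_{\alpha,r_1,r_2,n} \in W_0^{1,2}(A_n(r_1,r_2))$ is immediate from its smoothness on $[r_1,r_2]$ (since $r_1 > 0$) and its vanishing on both bounding spheres. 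Existence of the extremal forces the constant $4^{-1}(2-\alpha-n)^2 + [\pi/\ln(r_2/r_1)]^2$ to be optimal. The only point requiring mild care — and the main (modest) obstacle — is the passage from $C_0^{\infty}(A_n(r_1,r_2))$ to $W_0^{1,2}(A_n(r_1,r_2))$: I would prove the inequality on $C_0^{\infty}$ first and close it by density exactly as in Theorem \ref{t2.4}, thereby avoiding any Fubini-type slicing argument for Sobolev functions; everything else is bookkeeping with the exponent shift $\alpha \mapsto \alpha + n - 1$.
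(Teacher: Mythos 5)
Your proposal is correct and follows essentially the same route as the paper: decompose the gradient via \eqref{2.37}, drop the nonnegative angular term, apply Theorem \ref{t2.4} radially with the exponent shift $\alpha \mapsto \alpha + n - 1$, extend by density to $W_0^{1,2}(A_n(r_1,r_2))$, and saturate the inequality with the radial extremal $f_{\alpha,r_1,r_2,n}(r) = f_{\alpha+n-1,r_1,r_2}(r)$. No gaps.
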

\begin{proof}
By Theorem \ref{t3.1} and \eqref{4.2} one infers that 
\begin{align}
& \int_{A_n(r_1,r_2)} d^n x \, |x|^{\alpha} |(\nabla f)(x)|^2      \no \\
& \quad = \int_{\bbS^{n-1}} d^{n-1} \omega(\theta) \int_{r_1}^{r_2} r^{n-1} dr \, 
r^{\alpha} \big\{|(\partial f)/(\partial r)(r,\theta)|^2 
+ r^{-2} |(\nabla_{\bbS^{n-1}}f(r,\dott))(\theta)|^2\big\}    \no \\
& \quad \geq \int_{\bbS^{n-1}} d^{n-1} \omega(\theta) \int_{r_1}^{r_2} dr \, r^{\alpha + n - 1} 
|(\partial f)/(\partial r)(r,\theta)|^2     \no \\
& \quad \geq \big[4^{-1} (2 -\alpha - n)^2 + [\pi/\ln(r_2/r_1)]^2\big]
\int_{\bbS^{n-1}} d^{n-1} \omega(\theta) \int_{r_1}^{r_2} r^{\alpha+n-3} dr \, |f(r,\theta)|^2     \no \\
& \quad = \big[4^{-1} (2 -\alpha - n)^2 + [\pi/\ln(r_2/r_1)]^2\big] 
\int_{A_n(r_1,r_2)} d^n x \, |x|^{\alpha-2} \, |f(x)|^2,     \lb{4.5} \\
& \hspace*{7.1cm} f \in C_0^{\infty}(A_n(r_1,r_2)).      \no
\end{align}
Since $C_0^{\infty}(A_n(r_1,r_2))$ is dense in $W_0^{1,2}(A_n(r_1,r_2))$, and once more utilizing the by now familiar argument that $x \mapsto |x|^{\alpha-2}$ and $x \mapsto |x|^{\alpha}$ are bounded from above and from below away from $0$, \eqref{4.3} follows for all 
$f \in W_0^{1,2}(A_n(r_1,r_2))$ from \eqref{4.5} as in the proof of Theorem \ref{t3.1}. 

Noting that $f_{\alpha,r_1,r_2,n}(r) = f_{\alpha+n-1,r_1,r_2}(r)$, $r \in [r_1,r_2]$, where 
$f_{\alpha+n-1,r_1,r_2}$ is given by \eqref{3.2} with $\alpha,a,b$ replaced by $\alpha+n-1,r_1,r_2$, respectively, one infers that $f_{\alpha,r_1,r_2,n} \in W_0^{1,2}(A_n(r_1,r_2))$ and that equality holds in \eqref{4.3} for $f = f_{\alpha,r_1,r_2,n}$.
\end{proof}

For Hardy-type inequalities on spherical shell domains of a rather different nature we also refer to \cite[Lemma~1]{KW72}.

In order to prove higher-order (i.e., Rellich, etc.) Hardy-type inequalities we need some preparations. 

We denote by $- \Delta_{\bbS^{n-1}}$, $n \in \bbN$, $n\geq 2$, (minus) the Laplace--Beltrami operator in $L^2(\bbS^{n-1}; d^{n-1}\omega)$, with $d^{n-1}\omega$ the standard surface measure on $\bbS^{n-1}$. One recalls that the spectrum of $- \Delta_{\bbS^{n-1}}$, consisting of discrete eigenvalues $\lambda_j$, $j\in\bbN_0$, only, is given by 
\begin{equation}
\sigma(- \Delta_{\bbS^{n-1}}) = \{\lambda_j\}_{j \in \bbN_0} = \{j(j+n-2)\}_{j \in \bbN_0},
\end{equation}
with multiplicities $m(\lambda_j)$, $j\in\bbN_0$, given by 
\begin{equation}
m(\lambda_j) = \f{2j+n-2}{j+n-2}\binom{j+n-2}{n-2}, \quad j \in \bbN_0,
\end{equation}
with corresponding normalized eigenfunctions
\begin{equation}
\varphi_{j,\ell}, \quad \|\varphi_{j,\ell}\|_{L^2(\bbS^{n-1}; d^{n-1}\omega)} = 1, \quad \ell \in \{1,\dots,m(\lambda_j)\}, \; j \in \bbN_0.
\end{equation}
In particular, the collection of all eigenfunctions, $\{\varphi_{j,\ell}\}_{1\leq \ell \leq m(\lambda_j), \, j \in \bbN_0}$, represents an orthonormal basis of $L^2(\bbS^{n-1}; d^{n-1}\omega)$.

Introducing 
\begin{align}\lb{4.9}
\begin{split}
F_{f,j,\ell}(r) = (\varphi_{j,\ell}, f(r, \cdot) )_{L^{2}(\bbS^{n-1},d^{n-1}\omega)} = \int_{\bbS^{n-1}} d^{n-1}\omega(\theta) \, 
\ol{\varphi_{j,\ell}(\theta)} f(r,\theta),& \\
f \in C_{0}^{\infty}(\R^{n} \bs \{0\}), \; r \in (0,\infty), \; \ell \in \{1,\dots,m(\lambda_j)\}, \; j \in \N_{0},&
\end{split}
\end{align}
we start by recalling the following results from \cite{GMP24}:

\begin{lemma} [\cite{GMP24}, Lemma~2.1\,$(i)$, $(iv)$] \lb{l4.2}
Let $f \in C_{0}^{\infty}(A_n(r_1,r_2))$. Then the following items $(i)$ and $(ii)$ hold: \\[1mm] 
$(i)$ $F_{f,j,\ell} \in C_{0}^{\infty}((0,\infty))$ for all $\ell \in \{1,\dots,m(\lambda_j)\}$, $j \in \N_{0}$. \\[1mm]
$(ii)$ One has, in the sense of $L^{2}(\R^n;d^nx)$, 
\begin{align}\lb{4.10}
\begin{split} 
(-\Delta f )(x) = \sum_{j\in\bbN_0}\sum_{\ell=1}^{m(\lambda_j)} \big[-r^{1-n} \big( r^{n-1} F_{f,j,\ell}^{\, \prime}(r)\big)' + \lambda_{j} r^{-2} F_{f,j,\ell}(r)\big] \varphi_{j,\ell}(\theta),&   \\
 x =(r,\theta) \in A_n(r_1,r_2).&  
\end{split}
\end{align}
\end{lemma}

\begin{lemma} [\cite{GMP24}, Lemma~2.3] \lb{l4.3}
For all $\alpha \in \bbR$, $f \in C_{0}^{\infty}((r_1,r_2))$, $f$ real-valued, and $\lambda \in [0,\infty)$, one infers that 
\begin{align}\lb{4.11}
&\int_{r_1}^{r_2} dr \, r^{\alpha+n-1} \big[-r^{1-n} \big( r^{n-1} f'(r)\big)' + \lambda r^{-2} f(r)  \big]^{2}     \no \\
&\quad= \int_{r_1}^{r_2} dr \, r^{\alpha+n-1} |f''(r)|^{2} + [2\lambda + (n-1)(1-\alpha)]\int_{r_1}^{r_2} dr \, r^{\alpha+n-3} |f'(r)|^{2}     \no \\
&\qquad + \lambda[\lambda + (\alpha+n-4)(2-\alpha)] \int_{r_1}^{r_2} dr \, r^{\alpha+n-5} |f(r)|^{2}.
\end{align}
\end{lemma}

\begin{lemma} \lb{l4.4}
For all $\alpha \in \bbR$, $f \in C_{0}^{\infty}((r_1,r_2))$, $f$ real-valued, and $\lambda \in [0,\infty)$, one obtains  
\begin{align}
& \int_{r_1}^{r_2} dr \, r^{\alpha+n-1} \big|- r^{1-n} \big(r^{n-1} f'(r)\big)' + \lambda r^{-2} f(r)\big|^2  
\no \\
& \quad \geq \Big\{\big(\wti \gamma_{n,\alpha} + \lambda\big)^2 +[\pi/\ln(r_2/r_1)]^4     \\
& \qquad \;\;\, + \big(2^{-1}\big[(\alpha-2)^2 + (n-2)^2\big] + 2\lambda\big)[\pi/\ln(r_2/r_1)]^2 \Big\}  
\int_{r_1}^{r_2} dr \, r^{\alpha +n -5} |f(r)|^2,     \no 
\end{align}
where 
\begin{equation}
\wti \gamma_{n,\alpha} = \big[(n-2)^2 - (\alpha-2)^2\big]\big/4, \quad \alpha \in \bbR.    \lb{4.13} 
\end{equation}
\end{lemma}
\begin{proof}
Applying Lemma \ref{l4.3}, Theorem \ref{t3.1}, and Theorem \ref{t3.3}, one obtains 
\begin{align}
& \int_{r_1}^{r_2} dr \, r^{\alpha+n-1} \big|- r^{1-n} \big(r^{n-1} f'(r)\big)' + \lambda r^{-2} f(r)\big|^2     \no \\
&\quad= \int_{r_1}^{r_2} dr \, r^{\alpha+n-1} |f''(r)|^{2} + [2\lambda + (n-1)(1-\alpha)]\int_{r_1}^{r_2} dr \, r^{\alpha+n-3} |f'(r)|^{2}     \no \\
&\qquad + \lambda[\lambda + (\alpha+n-4)(2-\alpha)] \int_{r_1}^{r_2} dr \, r^{\alpha+n-5} |f(r)|^{2}     \no \\ 
& \quad \geq \big[4^{-1} (\alpha+n-2)^2 + [\pi/\ln(r_2/r_1)]^2]\big] \big[4^{-1} (\alpha+n-4)^2 
+ [\pi/\ln(r_2/r_1)]^2\big] 
\no \\
& \qquad \times \int_{r_1}^{r_2} dr \, r^{\alpha+n-5} |f(r)|^2    \no \\
& \qquad + [2 \lambda + (n-1)(1-\alpha)] \big[4^{-1} (\alpha+n-4)^2 + [\pi/\ln(r_2/r_1)]^2\big]     \no \\
& \qquad \quad  \times \int_{r_1}^{r_2} dr \, r^{\alpha+n-5} |f(r)|^2 
+ \lambda[\lambda + (\alpha + n-4)(2-\alpha)] \int_{r_1}^{r_2} dr \, r^{\alpha+n-5} |f(r)|^2   \no \\
& \quad = \bigg\{\big(\wti \gamma_{n,\alpha} + \lambda\big)^2 + \Big(2^{-1} \big[(\alpha-2)^2 + (n-2)^2\big] + 2 \lambda\Big) [\pi/\ln(r_2/r_1)]^2     \no \\ 
& \qquad \;\;\, + [\pi/\ln(r_2/r_1)]^4\bigg\} \int_{r_1}^{r_2} dr \, r^{\alpha+n-5} |f(r)|^{2}. 
\end{align}
\end{proof}

The power-weighted Rellich inequality on an annulus then reads as follows:

\begin{theorem} \lb{t4.5}
Let $\alpha \in \bbR$ and $f \in C_{0}^{\infty}(A_n(r_1,r_2))$. Then
\begin{align}
& \int_{A_n(r_1,r_2)} d^n x \, |x|^{\alpha} |(-\Delta f)(x)|^2    \no \\
& \quad \geq \Big\{\wti \beta_{n,\alpha} + 2^{-1} \big[(\alpha-2)^2 + (n-2)^2\big] [\pi/\ln(r_2/r_1)]^2 
+ [\pi/\ln(r_2/r_1)]^4\Big\}    \no \\
& \qquad \times \int_{A_n(r_1,r_2)} d^n x \, |x|^{\alpha-4} |f(x)|^2,     \lb{4.15}
\end{align}
where
\begin{equation}
\wti \beta_{n,\alpha} = \min \Big\{\big[\wti \gamma_{n,\alpha} + \lambda_j\big]^2 \, \Big| \, j \in \bbN_0\Big\}, 
\quad \alpha \in \bbR. 
\lb{4.16} 
\end{equation}
\end{theorem}
\begin{proof}
It suffices to prove \eqref{4.15} for real-valued $f$ only. By Lemmas \ref{l4.2}\,$(ii)$ and \ref{l4.4} one obtains 
\begin{align}
& \int_{A_n(r_1,r_2)} d^n x \, |x|^{\alpha} |(-\Delta f)(x)|^2    \no \\
& \quad = \int_{r_1}^{r_2} r^{n-1} dr \int_{\bbS^{n-1}} d^{n-1} \omega(\theta)   \no \\
& \qquad \times \sum_{j \in \bbN_0} \sum_{\ell=1}^{m(\lambda_j)} r^{\alpha} \big|- r^{1-n}\big(r^{n-1} F_{f,j,\ell}^{\, \prime}(r)\big)' + \lambda_j^2 r^{-2} F_{f,j,\ell}(r)\big|^2 |\varphi_{j,\ell}(\theta)|^2    \no \\
& \quad = \sum_{j \in \bbN_0} \sum_{\ell=1}^{m(\lambda_j)} \int_{r_1}^{r_2} dr \, r^{\alpha+n-1} 
\big|- r^{1-n}\big(r^{n-1} F_{f,j,\ell}^{\, \prime}(r)\big)' + \lambda_j^2 r^{-2} F_{f,j,\ell}(r)\big|^2     \no \\
& \quad \geq \sum_{j \in \bbN_0} \sum_{\ell=1}^{m(\lambda_j)} \bigg\{\big(\wti \gamma_{n,\alpha} + \lambda_j \big)^2 + \Big(2^{-1} \big[(\alpha-2)^2 + (n-2)^2\big] + 2 \lambda_j\Big) [\pi/\ln(r_2/r_1)]^2    \no \\
& \hspace*{2.55cm} + [\pi/\ln(r_2/r_1)]^4\bigg\} \int_{r_1}^{r_2} dr \, r^{\alpha+n-5} |F_{f,j,\ell}(r)|^{2}    \no \\
& \quad \geq \Big\{\wti \beta_{n,\alpha} + 2^{-1} \big[(\alpha-2)^2 + (n-2)^2\big] [\pi/\ln(r_2/r_1)]^2 
+ [\pi/\ln(r_2/r_1)]^4\Big\}    \no \\
& \qquad \times \sum_{j \in \bbN_0} \sum_{\ell=1}^{m(\lambda_j)} 
\int_{r_1}^{r_2} dr \, r^{\alpha+n-5} |F_{f,j,\ell}(r)|^{2}     \no \\
& \quad = \Big\{\wti \beta_{n,\alpha} + 2^{-1} \big[(\alpha-2)^2 + (n-2)^2\big] [\pi/\ln(r_2/r_1)]^2 
+ [\pi/\ln(r_2/r_1)]^4\Big\}    \no \\
& \qquad \times \sum_{j \in \bbN_0} \sum_{\ell=1}^{m(\lambda_j)} 
\int_{r_1}^{r_2} dr \int_{\bbS^{n-1}} d^{n-1}\omega(\theta)  \, r^{\alpha+n-5} |F_{f,j,\ell}(r)|^{2} 
|\varphi_{j,\ell}(\theta)|^2    \no \\
& \quad = \Big\{\wti \beta_{n,\alpha} + 2^{-1} \big[(\alpha-2)^2 + (n-2)^2\big] [\pi/\ln(r_2/r_1)]^2 
+ [\pi/\ln(r_2/r_1)]^4\Big\}    \no \\
& \qquad \times \int_{A_n(r_1,r_2)} d^n x \, |x|^{\alpha-4} |f(x)|^2. 
\end{align}
\end{proof}

\begin{remark} \lb{r4.6}
We note that as $r_1 \downarrow 0$ and $r_2 \uparrow \infty$ in inequality \eqref{4.15}, one recovers the optimal Rellich inequality of Caldirola and Musina \cite[Theorem~3.1, eqs.~(1.6), (1.7)]{CM12} and \cite{GPPS24}. Also,  keeping $r_2 > 0$ fixed and letting $r_1 \downarrow 0$ in \eqref{4.15}, one recovers the optimal Rellich inequality in \cite[Eq. (A.49) and (A.50)]{GPPS24} for the punctured ball of radius $R > 0$. 
\hfill $\diamond$
\end{remark}

An appropriate iteration procedure then yields the following higher-order power-weighted Rellich inequalities.

\begin{theorem} \lb{t4.7}
Let $m,n \in \bbN$, $n \geq 2$, and $\alpha \in \bbR$. Then 
\begin{align}
\begin{split} 
& \int_{A_n(r_1,r_2)} d^n x \, |x|^{\alpha} \big|\big((-\Delta)^m f\big)(x)\big|^2 \geq 
\Bigg(\prod_{j=1}^m D_{n,r_1,r_2}(\alpha - 4(j-1))\Bigg)    \lb {4.18} \\
& \quad \times \int_{A_n(r_1,r_2)} d^n x \, |x|^{\alpha-4m} |f(x)|^2, 
\quad f \in C_0^{\infty}(A_n(r_1,r_2)),
\end{split} 
\end{align}
where
\begin{equation}
D_{n,r_1,r_2}(\gamma) = \wti \beta_{n,\gamma} + 2^{-1} \big[(\gamma-2)^2 + (n-2)^2\big] [\pi/\ln(r_2/r_1)]^2 
+ [\pi/\ln(r_2/r_1)]^4, \quad \gamma \in \bbR.    \lb{4.19} 
\end{equation}
Moreover,
\begin{align}
\begin{split} 
\int_{\bbR^n} d^n x \, |x|^{\alpha} \big|\big((-\Delta)^m f\big)(x)\big|^2 \geq 
\Bigg(\prod_{j=1}^m \wti \beta_{n,\alpha-4(j-1)}\Bigg) 
\int_{\bbR^n} d^n x \, |x|^{\alpha-4m} |f(x)|^2,&    \lb {4.20} \\
f \in C_0^{\infty}(\bbR^n\backslash\{0\}).&
\end{split} 
\end{align}
\end{theorem}
\begin{proof}
We will employ induction on $m$. Clearly, Theorem \ref{t4.5} implies that \eqref{4.18} holds for $m=1$. Assuming \eqref{4.18} holds for some $m_0 \in \bbN$, one obtains upon applying \eqref{4.18} to $-\Delta f$, 
\begin{align}
\begin{split} 
& \int_{A_n(r_1,r_2)} d^n x \, |x|^{\alpha} \big|\big((-\Delta)^{m_0+1} f\big)(x)\big|^2 \geq 
\Bigg(\prod_{j=1}^{m_0} D_{n,r_1,r_2}(\alpha - 4(j-1))\Bigg)    \lb {4.21} \\
& \quad \times \int_{A_n(r_1,r_2)} d^n x \, |x|^{\alpha-4m_0} |(-\Delta f)(x)|^2, 
\quad f \in C_0^{\infty}(A_n(r_1,r_2)).
\end{split} 
\end{align}
Employing Theorem \ref{t4.5} to the integral on the right-hand side of \eqref{4.21} results in   
\begin{align}
& \int_{A_n(r_1,r_2)} d^n x \, |x|^{\alpha} \big|\big((-\Delta)^{m_0+1} f\big)(x)\big|^2 \geq 
\Bigg(\prod_{j=1}^{m_0} D_{n,r_1,r_2}(\alpha - 4(j-1))\Bigg)   \no \\
& \qquad \times D_{n,r_1,r_2}(\alpha-4m_0) \int_{A_n(r_1,r_2)} d^n x \, |x|^{\alpha-4m_0-4} |f(x)|^2 \no \\
& \quad = \Bigg(\prod_{j=1}^{m_0+1} D_{n,r_1,r_2}(\alpha - 4(j-1))\Bigg)
 \int_{A_n(r_1,r_2)} d^n x \, |x|^{\alpha-4(m_0+1)} |f(x)|^2,    \lb{4.22} \\
& \hspace*{7.7cm} f \in C_0^{\infty}(A_n(r_1,r_2)),    \no
\end{align}
and hence in \eqref{4.18} for $m_0+1$.

Inequality \eqref{4.20} follows upon observing that 
\begin{equation}
\lim_{r_1 \downarrow 0, \, r_2 \uparrow \infty} D(\alpha - 4(j-1)) = \wti \beta_{n,\alpha - 4(j-1)}, \quad 
\alpha \in \bbR, \; j=1,\dots,m.     \lb{4.23} 
\end{equation}
\end{proof}

Similarly, an iteration procedure also yields the following sequence of power-weighted Hardy--Rellich-type inequalities. 

\begin{theorem} \lb{t4.8}
Let $m,n \in \bbN$, $n \geq 2$, and $\alpha \in \bbR$. Then 
\begin{align}
& \int_{A_n(r_1,r_2)} d^n x \, |x|^{\alpha} \big|\big(\nabla (-\Delta)^m f\big)(x)\big|^2     \no \\
& \quad \geq 
\big\{4^{-1} (\alpha+n-2)^2 + [\pi/\ln(r_2/r_1)]^2\big\} 
\Bigg(\prod_{j=1}^m D_{n,r_1,r_2}(\alpha - 2 - 4(j-1))\Bigg)    \lb {4.24} \\
& \qquad \times \int_{A_n(r_1,r_2)} d^n x \, |x|^{\alpha-2-4m} |f(x)|^2, 
\quad f \in C_0^{\infty}(A_n(r_1,r_2)).     \no 
\end{align}
Moreover,
\begin{align}
\begin{split} 
& \int_{\bbR^n} d^n x \, |x|^{\alpha} \big|\big(\nabla (-\Delta)^m f\big)(x)\big|^2     \\
& \quad \geq 
4^{-1} (\alpha+n-2)^2 \Bigg(\prod_{j=1}^m \wti \beta_{n,\alpha-2-4(j-1)}\Bigg)  
\int_{\bbR^n} d^n x \, |x|^{\alpha-2-4m} |f(x)|^2,    \lb {4.25} \\
& \hspace*{7.8cm} f \in C_0^{\infty}(\bbR^n\backslash\{0\}). 
\end{split} 
\end{align}
\end{theorem} 
\begin{proof}
Once more we will employ induction on $m$. Applying Theorems \ref{t4.1} and \ref{t4.5} one obtains for $m=1$,
\begin{align}
& \int_{A_n(r_1,r_2)} d^n x \, |x|^{\alpha} |(\nabla (-\Delta f)(x)|^2     \no \\
& \quad \geq 
\big\{4^{-1} (\alpha+n-2)^2 + [\pi/\ln(r_2/r_1)]^2\big\} 
\int_{A_n(r_1,r_2)} d^n x \, |x|^{\alpha-2} |(-\Delta f)(x)|^2    \no \\
& \quad \geq \big\{4^{-1} (\alpha+n-2)^2 + [\pi/\ln(r_2/r_1)]^2\big\} D_{n,r_1,r_2}(\alpha-2)   \no\\
& \qquad \times \int_{A_n(r_1,r_2)} d^n x \, |x|^{\alpha-6} |f(x)|^2, 
\quad f \in C_0^{\infty}(A_n(r_1,r_2)),   \lb{4.26}
\end{align}
proving \eqref{4.24} for $m=1$. Next, suppose that \eqref{4.24} holds for some $m_0 \in \bbN$. Then, applying \eqref{4.24} to $-\Delta f$ yields 
\begin{align}
& \int_{A_n(r_1,r_2)} d^n x \, |x|^{\alpha} \big|\big(\nabla (-\Delta)^{m_0+1} f\big)(x)\big|^2     \no \\
& \quad \geq 
\big\{4^{-1} (\alpha+n-2)^2 + [\pi/\ln(r_2/r_1)]^2\big\} 
\Bigg(\prod_{j=1}^{m_0} D_{n,r_1,r_2}(\alpha - 2 - 4(j-1))\Bigg)   \no \\
& \qquad \times \int_{A_n(r_1,r_2)} d^n x \, |x|^{\alpha-2-4m_0} |(-\Delta f)(x)|^2, 
\quad f \in C_0^{\infty}(A_n(r_1,r_2)).    \lb{4.27}  
\end{align}
An application of Theorem \ref{t4.5} to the last integral in \eqref{4.27} finally implies
\begin{align}
& \int_{A_n(r_1,r_2)} d^n x \, |x|^{\alpha} \big|\big(\nabla (-\Delta)^{m_0+1} f\big)(x)\big|^2     \no \\
& \quad \geq 
\big\{4^{-1} (\alpha+n-2)^2 + [\pi/\ln(r_2/r_1)]^2\big\} 
\Bigg(\prod_{j=1}^{m_0} D_{n,r_1,r_2}(\alpha - 2 - 4(j-1))\Bigg)  \no \\
& \qquad \times D_{n,r_1,r_2}(\alpha-2-4m_0) \int_{A_n(r_1,r_2)} d^n x \, |x|^{\alpha-2-4m_0-4} |f(x)|^2,   \no \\
& \quad = 
\big\{4^{-1} (\alpha+n-2)^2 + [\pi/\ln(r_2/r_1)]^2\big\}
\Bigg(\prod_{j=1}^{m_0+1} D_{n,r_1,r_2}(\alpha - 2 - 4(j-1))\Bigg)  \no \\
& \qquad \times \int_{A_n(r_1,r_2)} d^n x \, |x|^{\alpha-2-4(m_0+1)} |f(x)|^2, 
\quad f \in C_0^{\infty}(A_n(r_1,r_2)),     
\end{align}
and hence \eqref{4.24} for $m_0+1$. 

Inequality \eqref{4.25} follows upon observing that 
\begin{equation}
\lim_{r_1 \downarrow 0, \, r_2 \uparrow \infty} D(\alpha - 2 - 4(j-1)) = \wti \beta_{n,\alpha - 2 - 4(j-1)}, \quad 
\alpha \in \bbR, \; j=1,\dots,m.     \lb{4.29} 
\end{equation}
\end{proof}

Together, \eqref{4.18} and \eqref{4.24} constitute what we dubbed the Birman--Hardy--Rellich-type sequence of power-weighted Hardy--Rellich-type inequalities on annuli \cite{Ba07}, \cite{DH98}, \cite[Sect.~6.4]{GM13} (see also \cite{Bi66} for the original one-dimensional sequence of Hardy--Rellich-type inequalities). 

\begin{corollary} \lb{c4.9} 
Let $m,n \in \bbN$, $n \geq 2$, and $\alpha \in \bbR$. Then the following items $(i)$ and $(ii)$ hold. \\[1mm]
$(i)$ If $\alpha \in [4m-n, n]$, then 
\begin{align}
\begin{split} 
\int_{\bbR^n} d^n x \, |x|^{\alpha} \big|\big((-\Delta)^m f\big)(x)\big|^2 \geq 
\Bigg(\prod_{j=1}^m \wti \gamma_{n,\alpha-4(j-1)}^2\Bigg) \int_{\bbR^n} d^n x \, |x|^{\alpha-4m} |f(x)|^2,&    \lb {4.30} \\
f \in C_0^{\infty}(\bbR^n\backslash\{0\}).&
\end{split} 
\end{align}
$(ii)$ If $\alpha \in [2+4m-n,n+2]$, then 
\begin{align}
\begin{split} 
& \int_{\bbR^n} d^n x \, |x|^{\alpha} \big|\big(\nabla (-\Delta)^m f\big)(x)\big|^2     \\
& \quad \geq 
4^{-1} (\alpha+n-2)^2 \Bigg(\prod_{j=1}^m \wti \gamma_{n,\alpha-2-4(j-1)}^2\Bigg) 
\int_{\bbR^n} d^n x \, |x|^{\alpha-2-4m} |f(x)|^2,    \lb {4.31} \\
& \hspace*{7.8cm} f \in C_0^{\infty}(\bbR^n\backslash\{0\}). 
\end{split} 
\end{align}
\end{corollary} 
\begin{proof}
We start by noting that according to \eqref{4.13}, $\wti \gamma_{n,\alpha} \in [0,\infty)$ if and only if 
$\alpha \in [4-n,n]$. Thus, by \eqref{4.16}, 
\begin{equation}
\wti \beta_{n,\alpha} = \wti \gamma_{n,\alpha}^2, \quad \alpha \in [4-n,n].    \lb{4.32}
\end{equation}
$(i)$ By \eqref{4.20}, in order to prove \eqref{4.30} it suffices to show that 
\begin{equation}
\wti \beta_{n,\alpha-4(j-1)} = \wti \gamma_{n,\alpha-4(j-1)}^2, \quad \alpha \in [4m-n,n], \; j=1,\dots,m.    \lb{4.33} 
\end{equation}
By \eqref{4.32}, one obtains \eqref{4.33} if 
\begin{equation}
\alpha - 4(j-1) \in [4-n,n], \quad j=1,\dots,m.    \lb{4.34}
\end{equation}
But \eqref{4.34} follows from the assumption $\alpha \in [4m-n,n]$. \\[1mm]
$(ii)$ By \eqref{4.25}, in order to prove \eqref{4.31} it suffices to show that 
\begin{equation}
\wti \beta_{n,\alpha-2-4(j-1)} = \wti \gamma_{n,\alpha-2-4(j-1)}^2, \quad 
\alpha \in [2+4m-n,n+2], \;j=1,\dots,m.    \lb{4.35} 
\end{equation}
By \eqref{4.32}, one obtains \eqref{4.35} if 
\begin{equation}
\alpha -2 - 4(j-1) \in [4-n,n], \quad j=1,\dots,m.    \lb{4.36}
\end{equation}
However, \eqref{4.36} follows from the assumption $\alpha \in [2+ 4m - n,n+2]$. 
\end{proof}

\begin{remark} \lb{r4.10}
$(i)$ If $\alpha \in (4m-n,2)$ (resp., $\alpha \in (4m-n,0]$), then \eqref{4.30} was proven by Davies and Hinz \cite[Theorem~12]{DH98} (resp., by Barbatis \cite[Theorem~1]{Ba07}). \\
$(ii)$ If $\alpha \in (2+4m-n,4]$ (resp., if $\alpha \in (2+4m-n,0]$), then \eqref{4.31} was proven by Davies and Hinz \cite[Theorem~13]{DH98} (resp., by Barbatis \cite[Theorem~1]{Ba07}). 
\end{remark} 

\medskip

\noindent
{\bf Acknowledgments.} We are indebted to the anonymous referee for a very careful reading of our manuscript. 


\end{document}